\documentclass{amsart}
\usepackage{amsmath,amssymb,amsthm,hyperref,mathtools}
\usepackage{color,tikz}
\usepackage{cleveref}
 \usetikzlibrary{calc}

\newtheorem{thm}{Theorem}[section]
\newtheorem{cor}[thm]{Corollary}

\newtheorem{lem}[thm]{Lemma}
\newtheorem{prop}[thm]{Proposition}
\theoremstyle{remark}

\theoremstyle{definition}

\newtheorem{defn}[thm]{Definition}

\newcommand{\Cay}[2]{\mathrm{Cay}(#1,#2)}		
\newcommand{\etdom}{\gamma_{\mathrm{all}}^\infty}

\newcommand{\ov}{\mathrm{ov}}

\title{Eternal domination in Cayley graphs}
\author{MacKenzie Carr}
\address{Department of Mathematics\\ Toronto Metropolitan University \\ Toronto, ON M5B 2K3\\
Canada}
\email{mackenzie.carr@torontomu.ca}

\author{Nancy E.~Clarke}
\thanks{N.E.~Clarke acknowledges research support from NSERC (2020-06528).}

\address{Department of Mathematics and Statistics\\
Acadia University \\
Wolfville, NS B4P 2R6\\
Canada}
\email{nancy.clarke@acadiau.ca}

\author{Gary MacGillivray}
\thanks{Supported by the Natural Sciences and Engineering Research Council of Canada (2017-04459)}
\address{Department of Mathematics and Statistics\\
University of Victoria\\
Victoria, BC  V8W 2Y2\\
Canada}
\email{gmacgill@uvic.ca}

\author{Joy Morris}
\thanks{Supported by the Natural Science and Engineering Research Council of Canada (grant RGPIN-2024-04013).}

\address{Department of Mathematics and Computer Science\\
	University of Lethbridge\\
	Lethbridge, AB T1K 3M4\\
	Canada}

\email{joy.morris@uleth.ca}

\begin{document}

\begin{abstract}
    Eternal domination is a process in which a set of guards occupying a dominating set on a graph protects against an infinite sequence of attacks. After a vertex is attacked, one guard must move along an edge to the attacked vertex and each of the remaining guards may move along an edge so that the guards again occupy a dominating set on the graph and can defend the next attack. The minimum number of guards needed in a graph $\Gamma$ is the eternal domination number, denoted by $\etdom(\Gamma)$. In this paper, we show that the eternal domination number of a vertex-transitive graph with an efficient dominating set is equal to its domination number. We show that a Cayley graph on a generalized dihedral group whose connection set contains few or many reflections is efficiently dominated. Then, we provide an infinite family of connected Cayley graphs for which $\etdom(\Gamma) = \gamma(\Gamma)+1$, generalizing a result of [Braga et al., \emph{J.~Combin.~Math.~Combin.~Comput.~}96 (2016), 13--22]. Finally, we build an infinite family of connected Cayley graphs with $\etdom(\Gamma) \geq \gamma(\Gamma)+2$. 
\end{abstract}

\keywords{Cayley graph, domination, eternal domination}
\subjclass[2020]{05C25, 05C69}

\maketitle

\section{Introduction}
Imagine placing mobile sensors, or \textit{guards} on the vertices of a graph.  These guards then \textit{reconfigure} in response to events, or \textit{attacks} that occur at vertices.  In the reconfiguration step, some guard must slide along an edge to the attacked vertex and the remaining guards either stay in place or slide along an edge to an adjacent vertex in such a way that the guards can then \textit{defend} the next attack.
The parameter of interest is the minimum number of guards so that this is possible.  

We consider only the model where all guards can move in response to an attack, and the sequence of attacks is infinite.  There are other models in which only one guard can move or the sequence of attacks has limited length;   see \cite{KM16, W23, V24} for further details.
In addition, we allow only one guard to occupy a vertex.  There are examples in the literature showing that, in the all-guards move model fewer guards can be required otherwise \cite{FGMO18, W23}.

In this context, an \textit{eternal dominating configuration} for a graph $\Gamma$ is a collection $\mathcal{D} = \{D_1, D_2, \ldots, D_\ell\}$ of subsets of $V$ such that $|D_i| = |D_j|$, and for every set $D_i \in \mathcal{D}$ and vertex $w \not\in D_i$, there exists a set $D_j \in \mathcal{D}$ with $w \in D_j$ so that guards located at vertices in $D_i$ can reconfigure to be located at vertices in $D_j$.  
A set $D \subseteq V$ is an \textit{eternal dominating set} if it belongs to an eternal dominating configuration.  The smallest integer for which there exists an eternal dominating set is the \textit{eternal domination number}, and is denoted by $\etdom(\Gamma)$ or $\etdom$ if the context is clear.

Eternal domination can be viewed as a 2-player discrete-time game. One player controls the initial position and the movement of the guards and the other player selects which vertex to attack.  After the guards are located on vertices, the players alternate turns.  On each turn, the attacker chooses a vertex and then the opponent reconfigures the guards so that one of them occupies the attacked vertex.  The attacker wins if the guards are ever unable to respond.  Otherwise the opponent wins.

Let $G$ be a group, and
let $S \subseteq G$ such that  if $s \in S$ then $s^{-1} \in S$.
The \textit{Cayley graph with connection set $S$}, denoted $\Gamma = $\textit{Cay$(G, S)$}, has vertex set $V(\Gamma) = G$ and an edge from $u$ to $v = us$ for every $s \in S$.
Cayley graphs are vertex transitive.  If $xv = w$ then the function $\phi: V \to V$ defined by $\phi(v) = xv$ is an automorphism of $G$ that sends $v$ to $w$. 

The \textit{domination number} of a graph, denoted by $\gamma(\Gamma)$ or $\gamma$ if the context is clear, is the smallest size of a set of vertices $D$ such that every vertex not in $D$ is adjacent to a vertex of $D$.  Clearly the domination number is a lower bound for the eternal domination number.  One upper bound for the eternal domination number is $2\gamma$ and equality can occur for each value of $\gamma \geq 3$ \cite{KM16}.

Eternal domination in the all guards move model was introduced in \cite{GHH2005}.
It it true that Cayley graphs on abelian groups have equal eternal domination number and domination number.
By confusing right and left multiplication, it was further claimed in \cite{GHH2005} that $\etdom = \gamma$ for all Cayley graphs.
The first counterexamples were found by Braga, de Sousa and Lee \cite{BSL2016}.
They examined 7871 Cayley graphs and found that 61 of them had $\etdom = 1 + \gamma$ and the rest had $\etdom = \gamma$.  It was left as an open problem whether there exists a connected Cayley graph with $\etdom > 1 + \gamma$.

In the next section we show that if a vertex-transitive graph has an efficient dominating set, then it is an eternal dominating set.  Hence $\etdom = \gamma$ for these graphs.  In the subsequent three sections we explore infinite families of Cayley graphs for which the eternal domination number is equal to, one greater than, and two greater than the domination number.  The last section yields a positive partial solution to the question raised in \cite{BSL2016}.

\section{Efficient dominating sets}

In this section, we prove that in the case of vertex-transitive graphs, if there is an efficient dominating set then it is an eternal dominating set. This means that for Cayley graphs that have an efficient dominating set, the problem of eternal domination is solved.

Of course, in a regular graph with $v$ vertices and valency $k$, each vertex dominates $k+1$ vertices, so an efficient dominating set must consist of $v/(k+1)$ vertices. This means that an efficient dominating set can only exist if $k+1 \mid v$. So it will never be the case that this result gives us information about all Cayley graphs on some group (unless the group is extremely small). Indeed, the problem of finding whether an efficient dominating set exists in a Cayley graph even when this divisibility condition is satisfied is highly nontrivial. Efficient dominating sets are also known as perfect codes, and the problem has been studied using either terminology, but even for cyclic groups results are incomplete. See, for example, \cite{CM2013,DSLW2017,D2014,FHZ2017,FHKL2025,KM2013}.

\begin{prop}\label{prop:efficient}
    If every vertex of the graph $\Gamma$ lies in some efficient dominating set, then $\etdom(\Gamma)=\gamma(\Gamma)$.
\end{prop}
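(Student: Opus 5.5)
The plan is to take as the eternal dominating configuration the collection $\mathcal{D}$ of \emph{all} efficient dominating sets of $\Gamma$, and to show two things: every member of $\mathcal{D}$ has size $\gamma(\Gamma)$, and guards on any $D\in\mathcal{D}$ can move to any other $D'\in\mathcal{D}$ in one step. By hypothesis $\mathcal{D}$ is nonempty and every vertex $w$ lies in some $D'\in\mathcal{D}$. So these two facts give that each $D\in\mathcal{D}$ is an eternal dominating set with $\gamma(\Gamma)$ guards. Since $\gamma\le\etdom$ always holds, this proves the proposition. Vertex-transitivity is not needed here; the hypothesis that every vertex is covered is exactly what it would be used for.

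\textbf{Step 1: efficient dominating sets are minimum.} Let $D$ be efficient and $S$ any dominating set. For each $d\in D$ the closed neighbourhood $N[d]$ contains some vertex $s_d\in S$, because $d$ is dominated by $S$. The sets $N[d]$, $d\in D$, are pairwise disjoint by efficiency, so $d\mapsto s_d$ is injective. Hence $|D|\le |S|$, and therefore $|D|=\gamma(\Gamma)$. In particular all members of $\mathcal{D}$ have the same size, as the definition of an eternal dominating configuration requires.

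\textbf{Step 2: reconfiguration between any two efficient dominating sets.} Let $D,D'\in\mathcal{D}$. Since $D$ is efficient, each $x\in D'$ lies in $N[d]$ for exactly one $d\in D$; call this vertex $f(x)$.

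The map $f:D'\to D$ is injective. Indeed, suppose $x\neq y$ in $D'$ and $f(x)=f(y)=d$. Then $d\in N[x]\cap N[y]$, which contradicts the efficiency of $D'$. Since $|D|=|D'|$ by Step 1, $f$ is a bijection.

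Now let the guard at $f(x)$ move to $x$ for each $x\in D'$. This is either staying in place or sliding along an edge, because $x\in N[f(x)]$. After these moves the guards occupy exactly $D'$, with one guard per vertex.

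Given an attack at $w\notin D$, choose $D'\in\mathcal{D}$ containing $w$; this is possible by hypothesis. Reconfigure as above, and some guard then occupies $w$. The new position $D'$ is again in $\mathcal{D}$, so the process continues indefinitely.

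The only step needing care is the injectivity of $f$, where the efficiency of both $D$ and $D'$ is used. Everything else is bookkeeping, so I expect no real obstacle.
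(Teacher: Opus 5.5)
Your proof is correct and follows the same route as the paper: take all efficient dominating sets as the configuration, match each vertex of $D'$ with its unique dominator in $D$, and use the efficiency of $D'$ to get that this matching is injective. Your Step~1 (efficient dominating sets are minimum, so all have size $\gamma(\Gamma)$) makes explicit a point the paper leaves implicit. That point is needed both for the sets to have equal size and for the final equality $\etdom(\Gamma)=\gamma(\Gamma)$.
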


\begin{proof}
    We show that the collection $\mathcal D$ of all efficient dominating sets for $\Gamma$ is an eternal dominating configuration.
    
    Let $D\in \mathcal D$, and consider any vertex $v \notin D$. By assumption, $v \in D'$ for some $D' \in \mathcal D$.

    Since $D$ is an efficient dominating set, each vertex of $D'$ is dominated by a unique vertex of $D$. Since $D'$ is an efficient dominating set, each vertex of $D$ is dominated by a unique vertex of $D'$, so (since domination is a symmetric relationship) in particular no two vertices of $D'$ can be dominated by the same vertex of $D$.

    This implies that there is a way to reconfigure guards from $D$ to $D'$. 
\end{proof}

\begin{cor}
    If a vertex-transitive graph $\Gamma$ has an efficient dominating set, then $\etdom(\Gamma)=\gamma(\Gamma)$.
\end{cor}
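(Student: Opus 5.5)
The plan is to reduce the corollary to Proposition~\ref{prop:efficient}. It suffices to show that every vertex of $\Gamma$ lies in some efficient dominating set. Let $D$ be an efficient dominating set of $\Gamma$, which exists by hypothesis, and fix some vertex $d \in D$. Given an arbitrary vertex $v$, vertex-transitivity provides an automorphism $\phi$ of $\Gamma$ with $\phi(d) = v$. The candidate set containing $v$ is then $\phi(D)$.

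The key step is to check that automorphisms carry efficient dominating sets to efficient dominating sets. Since $\phi$ is a bijection on $V$ that preserves adjacency in both directions, it maps closed neighbourhoods to closed neighbourhoods: $\phi(N[x]) = N[\phi(x)]$ for every vertex $x$.
\begin{itemize}
\item Efficiency of $D$ means the closed neighbourhoods $N[x]$, $x \in D$, partition $V$.
\item Applying $\phi$, the sets $N[\phi(x)]$ partition $\phi(V) = V$.
\item Hence $\phi(D)$ is an efficient dominating set, and it contains $\phi(d) = v$.
\end{itemize}

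Since $v$ was arbitrary, the hypothesis of Proposition~\ref{prop:efficient} holds. That proposition then gives $\etdom(\Gamma) = \gamma(\Gamma)$.

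For completeness, I will also note why an efficient dominating set is a minimum dominating set, so that the $\gamma$ in the statement is the size of $D$. In a graph with an efficient dominating set, every dominating set $D''$ must meet each of the disjoint sets $N[x]$, $x \in D$. Therefore $|D''| \ge |D|$.

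There is no real obstacle here. The only point needing care is the observation that automorphisms preserve closed neighbourhoods, and that is immediate from the definition.
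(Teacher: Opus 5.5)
Your proposal is correct and matches the paper's proof: vertex-transitivity carries an efficient dominating set onto a set containing any given vertex, and Proposition~\ref{prop:efficient} then gives the result. You also fill in details the paper leaves implicit, namely that automorphisms preserve closed neighbourhoods and that an efficient dominating set is a minimum dominating set.
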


\begin{proof}
    Since the graph is vertex-transitive, the existence of an efficient dominating set implies that every vertex lies in an efficient dominating set.~\Cref{prop:efficient} now completes the proof.
\end{proof}

\section{Cayley graphs on generalised dihedral groups}

Although the typical definition of a Cayley graph assumes that the identity element of the group does not lie in the connection set, this is done only to ensure that the graphs are simple. Including the identity element adds a loop at every vertex. For many purposes such a loop is irrelevant. In the context of this paper, including the identity in the connection set (and a loop at every vertex) allows us to simplify some of our notation and arguments, as it ensures that (with $S$ representing the connection set of the Cayley graphs) the vertices dominated by a given vertex $g$ are the elements of $gS$ rather than $gS \cup \{g\}$. Accordingly, we will typically include the identity in our connection sets.

Although they use their own terminology in their statement, the following result appears in~\cite{BSL2016}.

\begin{thm}[\cite{BSL2016}, Theorem 2]\label{like-abelian}
    If $\Gamma=\Cay{G}{S} \cong \Cay{A}{C}$ where $A$ is an abelian group, then $\etdom(\Gamma)=\gamma(\Gamma).$
\end{thm}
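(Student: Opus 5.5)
Since $\etdom$ and $\gamma$ are both graph invariants, it suffices to prove the statement for $\Gamma' = \Cay{A}{C}$ with $A$ abelian. We may also assume $1 \in C$, following the convention above, so that the closed neighbourhood of $g$ is $gC$. Let $D$ be a minimum dominating set of $\Cay{A}{C}$, so $|D| = \gamma$. The plan is to show that the collection
\[
\mathcal D = \{\, aD : a \in A \,\}
\]
of all translates of $D$ is an eternal dominating configuration.

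First, each translate $aD$ is a dominating set of size $\gamma$, because left multiplication by $a$ is a graph automorphism. Indeed, if $v = us$ then $av = aus$, since the edge is defined by right multiplication and left multiplication commutes with it.

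Next, fix a translate $aD$ and an attacked vertex $w \notin aD$. Because $aD$ dominates, there are $d \in D$ and $s \in C$ with $w = ads$. Put $D' = aDs$. Since $A$ is abelian, $aDs = (as)D$, so $D'$ is again a translate of $D$ and lies in $\mathcal D$. The reconfiguration moves the guard at each vertex $ax$ ($x \in D$) to $axs$. This is a move along an edge labelled $s$, or no move if $s = 1$, and it sends the guard at $ad$ to $w$. The map $ax \mapsto axs$ is injective, so no two guards land on the same vertex, and its image is exactly $aDs = D'$. Hence every attack on a configuration in $\mathcal D$ can be answered by moving to another configuration in $\mathcal D$, and so $\etdom \le \gamma$. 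The reverse inequality $\etdom \ge \gamma$ is immediate.

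The only subtle point is commutativity. In a general group, right-translating every guard by $s$ produces $aDs$, which need not be a left translate of $D$ and so need not be dominating. This is exactly the left/right confusion mentioned in the introduction. Abelianness, together with transporting the problem along the isomorphism $\Gamma \cong \Cay{A}{C}$, removes this obstacle.
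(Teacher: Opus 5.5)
Your proof is correct and takes essentially the paper's approach. The paper does not prove this statement itself but cites it, and in Theorem~\ref{abelian} it gives the underlying argument directly: answer an attack at $ds$ by moving every guard along the same edge label $s$, which lands on a dominating set because $s$ commutes with the relevant elements (in your abelian setting $aDs=(as)D$, or equivalently $aDsC=aDCs=A$). Using only the left translates of one minimum dominating set, rather than all minimum dominating sets, is a cosmetic difference.
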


Their proof boils down to the ideas from the paper~\cite{GHH2005}.  Due to confusion over left-multiplication versus right-multiplication, the original proof from~\cite{GHH2005} is invalid without additional hypotheses. These are most easily fixed by simply assuming the group to be abelian. In the following theorem, we provide a direct proof of a generalisation of the central idea of~\cite{GHH2005}. Our main goal in this section is to use this to obtain a bit more information about the eternal domination number of some Cayley graphs on generalised dihedral groups, even when they are not isomorphic to some Cayley graph on an abelian group.

\begin{thm}\label{abelian}
    Let $D$ be a dominating set for the Cayley graph $\Cay{G}{S}$, and let $s \in S$ be such that $s$ is either centralised or inverted by every element of $G$. Then for any vertex $v$ in $Ds$ there is a dominating set $D'$ containing $v$ such that guards can be reconfigured from $D$ to $D'$.
\end{thm}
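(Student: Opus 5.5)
The natural choice is $D' = Ds$, with every guard moving simultaneously from $d$ to $ds$ for each $d \in D$. Since $s \in S$, the vertex $ds = d\cdot s$ is adjacent to $d$ in $\Cay{G}{S}$; if $s$ is the identity, this just means the guard stays put, using the loop convention of this section. Right multiplication by $s$ is a bijection $G \to G$, so distinct guards go to distinct vertices and $|D'| = |D|$. Thus the map $d \mapsto ds$ is a legitimate reconfiguration from $D$ to $Ds$, and $v \in Ds = D'$ by hypothesis. What remains is to show that $Ds$ is again a dominating set.

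With the identity included in $S$, a set $X$ dominates exactly when $XS = G$. We know $DS = G$ and want $DsS = G$. The key step is to use the hypothesis on $s$ to show $sS = Ss$, i.e.\ that $s$ normalises $S$ as a set. Once this holds, $DsS = DSs = Gs = G$ and we are done.

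The main obstacle is to verify $sS = Ss$, that is, $s^{-1}Ss = S$, from the hypothesis that every element of $G$ either centralises or inverts $s$. Take $t \in S$. If $t$ centralises $s$, then $ts = st$, so $st \in Ss$. If $t$ inverts $s$, then $t^{-1}st = s^{-1}$, so $st = ts^{-1}$. This does not immediately lie in $Ss$, so I would argue differently. From $t^{-1}st = s^{-1}$ we get $st = ts^{-1}$ and also $st^{-1} = t^{-1}s^{-1}$. Hence $st = (ts^{-1}\cdot s^{-1})s = (ts^{-2})s$, and I need $ts^{-2} \in S$, which is not obviously true.

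So instead of proving $sS = Ss$ outright, I would prove the weaker statement actually needed: every $g \in G$ lies in $DsS$. Write $gs^{-1} = dt$ with $d \in D$ and $t \in S$, so that $g = dts$, and try to rewrite $ts$ as $s t'$ with $t' \in S$. If $t$ centralises $s$, take $t' = t$. If $t$ inverts $s$, then $ts = s^{-1}t$, which is the wrong side. In that case I would instead start from $gs = dt$, so $g = dts^{-1} = dst$ and $g \in DsS$. To run this casework cleanly, I would split on whether $g$ itself centralises or inverts $s$, and choose to decompose either $gs^{-1}$ or $gs$ accordingly, so that the element $t \in S$ arising from the decomposition satisfies the needed commutation relation. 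This bookkeeping is the delicate point, and it is where the hypothesis that \emph{every} element of $G$, not just every element of $S$, centralises or inverts $s$ gets used.
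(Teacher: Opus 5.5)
There is a genuine gap. The target $D'=Ds$ is the wrong set, and in general it is not dominating, so no case analysis can finish the last step. Take $G=D_8=\langle r,x : r^4=x^2=1,\ xrx=r^{-1}\rangle$, $S=\{1,r,r^{-1},x\}$ and $s=r$; every rotation centralises $r$ and every reflection inverts it. The set $D=\{1,xr^2\}$ is dominating, since $S\cup xr^2S=\{1,r,r^3,x\}\cup\{xr^2,xr^3,xr,r^2\}=G$. But $Ds=\{r,xr^3\}$ gives $DsS=\{1,r,r^2,xr^3\}\cup\{xr^3,x,xr^2,r\}$, which misses $r^3$ and $xr$; indeed $rS\ne Sr$.

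Your proposed split cannot work. Whether the element $t=d^{-1}gs^{\mp1}\in S$ centralises or inverts $s$ is decided by whether $d$ and $g$ lie in the same coset of $C_G(s)$, not by $g$ alone. In the example, $g=r^3$ is reached from $gs^{-1}=r^2$ only via $d=xr^2$, $t=x$, and from $gs=1$ only via $d=1$, $t=1$. Both times $t$ is of the wrong type. The identity $DsS=DSs$ you were aiming for is exactly the left/right confusion the paper says invalidated the original argument.

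The missing idea is that the guards must not all move by the same right multiplication. Write $D_1=D\cap C_G(s)$, $D_2=D\setminus C_G(s)$ and $v=ds$. The paper moves guards in the same part as $d$ by $s$ and the others by $s^{-1}$: to $D_1s\cup D_2s^{-1}$ if $d\in D_1$, and to $D_1s^{-1}\cup D_2s$ if $d\in D_2$. It checks domination by expanding with $sS_1=S_1s$ and $sS_2=S_2s^{-1}$, then regrouping into $G_1s\cup G_2s^{-1}=G$.

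A quicker way to see the same thing: $d's=sd'$ for $d'\in D_1$ and $d's^{-1}=sd'$ for $d'\in D_2$, so the first set is just $sD$ (and the second is $s^{-1}D$). This is the image of $D$ under the graph automorphism $g\mapsto sg$, hence dominating. Each move $d'\mapsto sd'=d'(d'^{-1}sd')$ uses an element of $\{s,s^{-1}\}\subseteq S$. Finally $v=ds=sd$ (respectively $v=s^{-1}d$), so $v$ lies in the new set.
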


\begin{proof}
    Observe that since every element of $G$ either centralises or inverts $s$, we can partition $G$ into $G_1=C_G(s)$ and $G_2=G \setminus G_1$. Furthermore, either $G=G_1$ or $G_1$ has index $2$ in $G$. Similarly, we can partition $D$ into $D_1=D\cap G_1$ and $D_2=D\cap G_2$, and $S$ into $S_1=S\cap G_1$ and $S_2=S\cap G_2$.

    Let $v=ds$ where $d \in D_i$, with $i \in \{1,2\}$. Then for every $d' \in D_i$, we move the defender from $d'$ to $d's$, while for every $d' \in D_{3-i}$, we move the defender from $d'$ to $d's^{-1}$ (we can do this since $S$ is inverse-closed). We claim that the vertices now occupied by the defenders form a dominating set $D'$ for $\Cay{G}{S}$.

    Observe that $D$ being a dominating set for $\Cay{G}{S}$ is equivalent to saying $G = DS$. (Here we are using the assumption that $1 \in S$.) We can refine this using our partitions to the following equivalent statement: $G_1=D_1S_1 \cup D_2S_2$ and $G_2=D_1S_2 \cup D_2S_1$.

    Although the proofs are very similar, we analyze the situations $i=1$ and $i=2$ separately. 

    Suppose first $i=1$. Then after moving, the defenders occupy the vertices $D_1s \cup D_2s^{-1}$, and $$(D_1s \cup D_2 s^{-1})S  = D_1sS_1 \cup D_1sS_2 \cup D_2s^{-1}S_1 \cup D_2s^{-1}S_2 $$
    $$ = D_1S_1s \cup D_1S_2s^{-1} \cup D_2S_1s^{-1}\cup D_2S_2s,$$ since $s$ commutes with every element of $S_1$ and is inverted by every element of $S_2$. Since $G_1 = C_G(s)$ is a subgroup of $G$ that contains $s$, we have that $G_1 = G_1s$. Similarly, $s^{-1}\in G_1$ and $G_2$ is either empty or the unique nontrivial coset of $G_1$ in $G$, so $G_2s^{-1}=G_2$. Grouping the terms above, we have 
    $$(D_1s\cup D_2s^{-1})S = (D_1S_1\cup D_2S_2)s \cup (D_1S_2\cup D_2S_1)s^{-1}$$
    $$ = G_1s\cup G_2s^{-1} = G_1\cup G_2 = G.$$
    Hence the defenders do occupy a dominating set after moving.
    
Now suppose $i=2$. Then after moving, the defenders occupy the vertices $D_1s^{-1} \cup D_2s$. 
Then $$(D_1s^{-1} \cup D_2 s)S  = D_1s^{-1}S_1 \cup D_1s^{-1}S_2 \cup D_2sS_1 \cup D_2sS_2$$
    $$ = D_1S_1s^{-1} \cup D_1S_2s \cup D_2S_1s\cup D_2S_2s^{-1},$$ since $s$ commutes with every element of $S_1$ and is inverted by every element of $S_2$. Since $G_1 = C_G(s)$ is a subgroup of $G$ that contains $s^{-1}$, we have that $G_1 = G_1s^{-1}$. Similarly, $s\in G_1$ and $G_2$ is either empty or the unique nontrivial coset of $G_1$ in $G$, so $G_2s=G_2$. Grouping the terms above, we have 
    $$(D_1s^{-1}\cup D_2s)S = (D_1S_1\cup D_2S_2)s^{-1} \cup (D_1S_2\cup D_2S_1)s $$ $$= G_1s^{-1}\cup G_2s = G_1\cup G_2 = G.$$
    Hence the defenders do occupy a dominating set after moving.
   \end{proof}

For clarity, we now define generalised dihedral groups.

\begin{defn}
    Let $A$ be an abelian group. The generalised dihedral group over $A$ is the group $$\langle A, x: x^2=1, x^{-1}ax=a^{-1} \ \forall a \in A\rangle.$$
\end{defn}

The following result is the best result we know of, to determine a fairly large class of Cayley graphs on generalised dihedral groups that are isomorphic to Cayley graphs on abelian groups. For these graphs, Theorem~\ref{like-abelian} applies directly to yield the eternal domination number in terms of the domination number.

\begin{thm}[\cite{MS2021}, Theorem 2.3]\label{isomorphic}
    Let $G$ be a generalised dihedral group over the abelian group $A$, and $\Gamma=\Cay{G}{S}$. If there is some $y \in xA$ such that for every $a \in A$ we have $ya \in S \cap xA$ if and only if $ya^{-1} \in S \cap xA$, then $\Gamma$ is isomorphic to a Cayley graph on the abelian group $A \times C_2$.
\end{thm}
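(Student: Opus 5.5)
This is a cited result (\cite{MS2021}, Theorem 2.3), but I would prove it by writing down an explicit isomorphism. I would identify $\Cay{G}{S}$ with a Cayley graph on $A\times C_2$ through a bijection $\phi\colon G\to A\times C_2$ that is adapted to $y$. Every element of $G$ is uniquely either $a$ or $ya$ with $a\in A$, since $y\in xA$ and $xA=yA$. Write $C_2=\{0,1\}$ and define $\phi(a)=(a,0)$ and $\phi(ya)=(a^{-1},1)$; the inversion on the second coset is deliberate. Split the connection set as $S_A=S\cap A$ and $S_y=S\cap xA=\{yb: b\in B\}$, where $B\subseteq A$. The hypothesis says exactly that $B=B^{-1}$. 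The candidate connection set is
$$C=\{(c,0): c\in S_A\}\cup\{(b,1): b\in B\},$$
and $C$ is inverse-closed because $S_A$ and $B$ are.

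Next I would verify edge by edge that $g\sim gs$ in $\Cay{G}{S}$ corresponds to $\phi(g)\sim\phi(gs)$. I would use the relations $y^2=1$ and $ya=a^{-1}y$, which hold because $y$ is a reflection. There are four cases.

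\begin{itemize}
\item For $g=a$ and $s=c\in S_A$: $\phi(ac)=(ac,0)=(a,0)+(c,0)$.
\item For $g=ya$ and $s=c\in S_A$: $yac$ maps to $(c^{-1}a^{-1},1)=(a^{-1},1)+(c^{-1},0)$. This uses $c^{-1}\in S_A$.
\item For $g=a$ and $s=yb$: $ayb=ya^{-1}b$ maps to $(ab^{-1},1)=(a,0)+(b^{-1},1)$. This uses $b^{-1}\in B$, which is where the hypothesis enters.
\item For $g=ya$ and $s=yb$: $yayb=a^{-1}b$ maps to $(a^{-1}b,0)=(a^{-1},1)+(b,1)$.
\end{itemize}

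In each case $\phi(gs)-\phi(g)\in C$. Since $\phi$ is a bijection and both graphs are regular of the same valency $|S|=|C|$, the map $\phi$ sends edges to edges injectively, so it is an isomorphism.

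The main obstacle is choosing $\phi$ correctly. The naive map $ya\mapsto(a,1)$ makes the difference depend on $a$ in some of the cases. The twist by inversion on the reflection coset, together with the symmetry $B=B^{-1}$, is what makes every difference lie in a fixed set. I would find the right twist by checking the reflection-times-reflection case first, and then confirm that the remaining cases only need $S_A$ and $B$ to be inverse-closed.
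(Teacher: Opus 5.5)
Your proof is correct and complete. The paper does not prove this statement itself: it is imported from \cite{MS2021} and used only as a black box in Corollary~\ref{one-reflection}, so there is no internal argument to compare against. All four edge cases check out. The hypothesis $B=B^{-1}$ is used exactly where it must be, namely in the mixed case $g=a$, $s=yb$ and for inverse-closure of $C$. Also $|C|=|S_A|+|B|=|S|$.

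Two small points of polish:
\begin{itemize}
\item The relations $y^2=1$ and $ya=a^{-1}y$ deserve the one-line check from $y=xa_0$: $y^2=xa_0xa_0=a_0^{-1}a_0=1$ and $yay^{-1}=xa_0aa_0^{-1}x=a^{-1}$.
\item The final step is cleanest stated vertexwise. For fixed $g$, the map $s\mapsto\phi(g)^{-1}\phi(gs)$ is an injection $S\to C$, hence a bijection since $|S|=|C|$. So $\phi(gS)=\phi(g)C$, which is the isomorphism statement directly. This also covers the loop at each vertex when $1\in S$, as in this paper's convention.
\end{itemize}

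It is worth knowing what your map is conceptually, since it is the standard route to results of this type.
\begin{itemize}
\item Since $ya'^{-1}=a'y$, you have $\phi^{-1}(a',\epsilon)=a'y^{\epsilon}$. So you are labelling $G$ by the orbit of $e$ under the permutations $g\mapsto a'gy^{\epsilon}$, that is, left multiplication by $A$ together with right multiplication $R_y$ by $y$.
\item Left multiplications are always automorphisms of $\Cay{G}{S}$. $R_y$ is one exactly when $ySy=S$. Because conjugation by $y$ inverts $A$ and sends $yb$ to $yb^{-1}$, this is precisely the condition $B=B^{-1}$.
\item Left and right multiplications commute, so these permutations form a regular subgroup of $\mathrm{Aut}(\Cay{G}{S})$ isomorphic to $A\times C_2$.
\item Sabidussi's theorem then yields a Cayley graph on $A\times C_2$ with connection set $\{(a',\epsilon): a'y^{\epsilon}\in S\}$, which is exactly your $C$.
\end{itemize}
This viewpoint explains why the inversion twist on $yA$ is forced and why the hypothesis is exactly the right one. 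Your explicit computation instead gives a fully elementary, self-contained proof with the connection set written down.
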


It is not necessarily obvious or easy to determine whether or not a given connection set $S$ actually has the property defined in Theorem~\ref{isomorphic}. The following result highlights a few situations in which it is relatively easy to see that the condition is satisfied.

\begin{cor}\label{one-reflection}
    Let $G$ be a generalised dihedral group of order $2n$ over the abelian group $A$, and let $\Gamma=\Cay{G}{S}$. If $|S\cap xA| \le 1$ or $|S \cap xA| \ge n-1$ then $\Gamma$ is isomorphic to a Cayley graph on the abelian group $A \times C_2$, and therefore $\etdom(\Gamma)=\gamma(\Gamma)$.
\end{cor}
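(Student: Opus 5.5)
The plan is to verify the hypothesis of Theorem~\ref{isomorphic}: we exhibit some $y \in xA$ such that, for every $a\in A$, $ya \in S\cap xA$ if and only if $ya^{-1}\in S\cap xA$. Then Theorem~\ref{isomorphic} gives $\Gamma \cong \Cay{A\times C_2}{C}$ for some $C$, and Theorem~\ref{like-abelian} yields $\etdom(\Gamma)=\gamma(\Gamma)$. Write $T = S\cap xA$. Note that $|xA| = n$. The condition says that the set $\{a \in A : ya\in T\}$ is closed under inversion in $A$. Equivalently, writing $T = yB$ with $B\subseteq A$, we need $B = B^{-1}$.

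\textbf{Case $|T|\le 1$.} If $T=\emptyset$, any $y\in xA$ works, since both sides of the equivalence are always false. If $T=\{t\}$, choose $y=t$. Then $B=\{1\}$, which is inverse-closed: $ya\in T$ iff $a=1$ iff $a^{-1}=1$ iff $ya^{-1}\in T$.

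\textbf{Case $|T|\ge n-1$.} If $T = xA$, any $y$ works, since $B = A$. Otherwise $xA\setminus T=\{t\}$ for a single element $t$. Take $y=t$. Then $ya\in T$ iff $a\ne 1$ iff $a^{-1}\neq 1$ iff $ya^{-1}\in T$, so $B = A\setminus\{1\}$ is inverse-closed.

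Each case is a one-line check, so there is no real obstacle. The only point needing care is confirming that the sets $B$ above are indeed inverse-closed subsets of $A$, and that taking $y$ to be the distinguished (present or missing) reflection is legitimate because it lies in $xA$. The final equality is then immediate from the two cited theorems.
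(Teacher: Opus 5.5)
Your proposal is correct and follows essentially the same route as the paper. It uses the same four cases ($S\cap xA$ empty, a singleton, all of $xA$, or $xA$ minus one element), takes $y$ to be the unique present or missing element of $xA$, verifies the condition of Theorem~\ref{isomorphic}, and then applies Theorem~\ref{like-abelian}.
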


\begin{proof}
    If we can show that $S$ satisfies the condition of Theorem~\ref{isomorphic}, this implies that $\Gamma$ is isomorphic to a Cayley graph on an abelian group, and therefore by Theorem~\ref{like-abelian}, the result follows.

    If $|S \cap xA|=0$ then the condition of Theorem~\ref{isomorphic} is satisfied vacuously. Similarly, if $S \cap xA=xA$ then the condition is satisfied for any choice of $y \in xA$.

    If $|S\cap xA|=1$ then taking $y$ to be the unique element of $S \cap xA$  it is easy to see that the condition of Theorem~\ref{isomorphic} is satisfied. Similarly, if $|S \cap xA|=n-1$ then it is straightforward to verify that the condition of Theorem~\ref{isomorphic} is satisfied by taking $y$ to be the unique element of $xA$ that is not in $S$.
\end{proof}

We conclude this section with an argument that allows us to expand the conditions of Corollary~\ref{one-reflection} slightly (increasing the size of the intersection of the connection set with the nontrivial coset of $A$ by one). For even this slight expansion the argument we use is not at all trivial.

\begin{thm}
    Let $G$ be a generalised dihedral group of order $2n$ over the abelian group $A$, and let $\Gamma=\Cay{G}{S}$. If $|S \cap xA| \le 2$ then $\etdom(\Gamma)=\gamma(\Gamma)$.
\end{thm}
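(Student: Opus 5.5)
We first reduce to a normal form. By Corollary~\ref{one-reflection} we may assume $|S\cap xA|=2$, say $S\cap xA=\{xa,xb\}$. The map fixing $A$ pointwise and sending $x\mapsto xa$ is a group automorphism of $G$, and it induces a graph isomorphism. So we may assume $S\cap xA=\{x,xc\}$ with $c\in A$, and we write $S_A=S\cap A$. We then test the hypothesis of Theorem~\ref{isomorphic} with $y=xe$. For $a\in A$ we have $ya\in S$ if and only if $ea\in\{1,c\}$, so the condition holds exactly when $c=e^2$. Hence, if $c$ is a square in $A$, Theorem~\ref{isomorphic} and Theorem~\ref{like-abelian} finish the proof. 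The real work is the case where $c$ is not a square in $A$.

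In general we take $D$ to be a minimum dominating set and aim to show that the collection of all minimum dominating sets is an eternal dominating configuration. Every $s\in S_A$ is centralised by every element of $A$ and inverted by every element of $xA$. So Theorem~\ref{abelian} applies to every such $s$, and any attack at a vertex of $DS_A$ can be answered by a reconfiguration to another dominating set of the same size. It remains to handle an attack at $v\in D\{x,xc\}\setminus DS_A$, where $v=dx$ or $v=dxc$ with $d\in D$.

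For such an attack we mimic the proof of Theorem~\ref{abelian}, using the two reflections in place of $s$ and $s^{-1}$. We split $D=D_1\cup D_2$ with $D_1=D\cap A$ and $D_2=D\cap xA$, and move every guard of one part by $x$ and every guard of the other part by $xc$. The choice of which part uses $x$ is made so that the guard at $d$ lands on $v$; we also consider the variant in which the roles of $x$ and $xc$ are exchanged. We then compute $D'S$ exactly as in Theorem~\ref{abelian}, using the identities $(xc)S_A(xc)^{-1}=S_A^{-1}=S_A$ and $x\{x,xc\}x=\{x,xc^{-1}\}$, and try to show $D'S=G$ from $G=DS$ split by cosets:
\[
A=D_1S_A\cup D_2\{x,xc\},\qquad xA=D_1\{x,xc\}\cup D_2S_A .
\]

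The main obstacle is this coverage check. Conjugating the reflection set by $x$ turns $\{x,xc\}$ into $\{x,xc^{-1}\}$, so the reflection part of $S$ is not preserved. The naive uniform move therefore dominates $D_i(S_A\cup\{x,xc^{-1}\})$-type sets rather than $D_iS$. I would first try to repair this by using $x$ for some guards and $xc$ for others within each coset, chosen according to which reflection dominates each vertex. Left translation by a suitable element of $A$ (an automorphism of $\Gamma$) can be used to normalise the attacked vertex first. If a uniform rule fails, the fallback is a counting and matching argument. Each guard has only the two reflection neighbours available. One shows that the vertices of $A$ dominated only through reflections are paired off by the involution $u\mapsto uc$ (equivalently $xu\mapsto xuc^{-1}$), and then builds the new guard positions along these $c$-orbits, where the fact that $c$ is not a square is what makes the orbit structure manageable.
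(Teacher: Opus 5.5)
\paragraph{A genuine gap: the key verification is missing.} Your preliminary steps are correct but unnecessary. These are the normalisation to $S\cap xA=\{x,xc\}$ and the square case via Theorem~\ref{isomorphic}. (With $y=xe$ the condition there also holds when $e^2=c^2=1$, so ``exactly when $c=e^2$'' is slightly off, but harmlessly.)

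More importantly, the move you describe is precisely the bijection $\varphi$ the paper uses: shift the guards of $D_1=D\cap A$ by one reflection and those of $D_2=D\cap xA$ by the other, chosen so that $d$ goes to $v$. The gap is that you never verify that this move lands on a dominating set. Instead you call it the main obstacle and assert that it fails. Your proposed repairs are only sketches with no argument behind them: mixing reflections inside a coset, or a matching along $c$-orbits that exploits $c$ not being a square. So the case of an attack at $v\in D\{x,xc\}$, which is the whole content of the theorem, is left open.

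\paragraph{Why the move actually works.} Your diagnosis is the error. Comparing $D_1xS=D_1(xSx)x$ with $D_1Sx$ looks for a single translate. In fact the two cosets of $A$ must be translated by different reflections, just as $G_1$ and $G_2$ are translated by $s$ and $s^{-1}$ in Theorem~\ref{abelian}. You need two identities. First, $rS_A=S_Ar$ for every $r\in xA$. Second, left multiplication of the reflection pair by one reflection equals right multiplication by the other:
\[
x\{x,xc\}=\{1,c\}=\{x,xc\}\,xc,\qquad xc\{x,xc\}=\{c^{-1},1\}=\{x,xc\}\,x .
\]
With $D'=D_1x\cup D_2xc$ these give
\[
D'S=\bigl(D_1S_A\cup D_2\{x,xc\}\bigr)x\cup\bigl(D_1\{x,xc\}\cup D_2S_A\bigr)xc=Ax\cup(xA)xc=xA\cup A=G .
\]
The variant with $x$ and $xc$ interchanged is identical. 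This closes the proof with no case split on whether $c$ is a square. The paper carries out the same verification elementwise: it checks that $\varphi(g)$ is dominated by $\varphi(d')$ whenever $g$ is dominated by $d'$, and then uses that $\varphi$ is a bijection of $G$.
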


\begin{proof}
    If $|S \cap xA|\le 1$ then the result follows from~\Cref{one-reflection}, so we may assume $|S\cap xA|=2$. We will show that the set of all minimal dominating sets for $\Gamma$ is an efficient dominating configuration.

    Let $D$ be a minimal dominating set for $\Gamma$, and let $v=ds$ for some $d \in D$ and $s \in S \cap A$. Since $s \in A$, every element of $G$ either centralises or inverts $s$, so~\Cref{abelian} tells us that we can reconfigure the defenders to a minimal dominating set $D'$ that includes $v$.

    Let $s,s'$ be the two elements of $S \cap xA$. Since $D$ is a dominating set, the only remaining possibility is that for some $d \in D$, $v=ds$ or $v=ds'$. Observe that the edges that come from $s$ and $s'$ induce a 2-factor in $\Gamma$, consisting of a collection of cycles of length $2|ss'|$ that alternate between vertices in $A$ and vertices in $xA$. 

    Observe that the map $\varphi$ given by $\varphi(g)=gs$ for $g \in dA$ and $\varphi(g)=gs'$ for $g \in xdA$ is a bijection on $G$.
    For each $d' \in D$, we reconfigure the defender from $d'$ to $\varphi(d')$, which is clearly adjacent to $d'$. We will now show that for every $g \in G$, $\varphi(g)$ is dominated by some vertex in $\varphi(D)$; since $\varphi$ is a bijection on $G$, this implies that $\varphi(D)$ is a minimal dominating set.

    We consider a few cases, based on the relationship between $D$ and $g$. First suppose that $g \in D$. Then $\varphi(g)\in \varphi(D)$ is dominated by $\varphi(D)$.

    Next suppose that $g=d'a$ for some $d' \in D$ and some $a \in A \cap S$. Then either $d',g \in dA$ or $d',g \in xdA$, so either $\varphi(d')=d's$ and $\varphi(g)=gs$, or $\varphi(d')=d's'$ and $\varphi(g)=gs'$. Choose $s'' \in \{s,s'\}$ so that $\varphi(d')=d's''$ and $\varphi(g)=gs''$. Since $g=d'a$ and $s'' \in xA$ inverts $a$, we have $\varphi(g)=d'as''=d's''a^{-1}=\varphi(d')a^{-1}$. Since $S$ is inverse-closed, $a^{-1} \in A \cap S$, so $\varphi(g)$ is dominated by $\varphi(d') \in \varphi(D)$.

    Finally, suppose $g=d's''$ for some $d' \in D$ and some $s'' \in \{s,s'\}$. Then for some $s_1, s_2$ with $\{s_1,s_2\}=\{s, s'\}$, we have $\varphi(d')=d's_1$ and $\varphi(g)=gs_2$. If $s_1=s''$ then $\varphi(d')=g$ and since $\varphi(g)=gs_2=\varphi(d')s_2$ and $s_2\in S$, $\varphi(g)$ is dominated by $\varphi(d') \in \varphi(D)$. The other possibility is that $s_2=s''$. In this case, since $s''$ is an involution and $g=d's''$ we have $d'=gs''=\varphi(g)$. Since $\varphi(d')=d's_1$ is adjacent to $d'=\varphi(g)$, again $\varphi(g)$ is dominated by $\varphi(d') \in \varphi(D)$. 

    From these cases we conclude that $\varphi(D)$ is indeed a dominating set, and therefore we have an eternal dominating configuration.
\end{proof}

\section{Cayley graphs whose eternal domination number is greater than their domination number}

In~\cite{BSL2016}, the authors find a Cayley graph $\Gamma$ that they are able to prove has $\etdom(\Gamma)=\gamma(\Gamma)+1$. Their example is a Cayley graph on the group we refer to as $G_3$, below. As discussed in the introduction, they computationally found a number of graphs with this property, but were unable to build either an infinite family, or a Cayley graph $\Gamma$ with $\etdom(\Gamma)>\gamma(\Gamma)+1$, except by taking disjoint copies of their example to produce disconnected graphs.

In this section we generalise their work to produce an infinite family of connected Cayley graphs,  on a family of groups $G_k$ for each odd $k \ge 3$, all of which have the property that $\etdom(\Gamma)=\gamma(\Gamma)+1$. In the following section we will further build on these ideas to produce an infinite family of Cayley graphs such that for each graph $\Gamma$ in the family, $\etdom(\Gamma)>\gamma(\Gamma)+1$. 

Our methods grow significantly in complexity as we try to increase the gap between the domination and eternal domination numbers; in fact we did not believe there to be sufficient value to justify the work that would be required to prove that the gap is exactly $2$ in the examples we produce in the next section, although this seems to be the case. To guarantee higher gaps may well be feasible, but our ideas do not seem likely to make it reasonable to find Cayley graphs for which the gap is arbitrarily large, or to prove it if we did find them.

We begin by introducing the family of groups on which the Cayley graphs in this section will be defined.

\begin{defn}
    Let $k \ge 3$ be odd. We define $$G_k=\langle \rho, \tau,a: \rho^k=\tau^2=a^k=e, a \in Z(G_k), \tau\rho=\rho^{-1}\tau\rangle,$$ where $e$ denotes the identity and $Z(G_k)$ the center of $G_k$.
    Notice that $G_k \cong D_{2k} \times C_k$ has order $2k^2$.

    We also define the following subgroups of $G_k$: 
    \begin{align*}
  H_k&=\langle \rho,a\rangle \cong C_{k} \times C_k,\\ A_k&=\langle a \rangle \cong C_{k}, \\K_k&=\langle \rho a\rangle \cong C_k,\text{ and}\\ K'_k&=\langle \rho a^{-1}\rangle \cong C_k.
    \end{align*}
\end{defn}
Note that $H_k$ and $A_k$ are normal in $G_k$, while $K_k$ and $K'_k$ are not. However, $K_k$ and $K_k'$ are normal in $H_k$, so within that context we may occasionally refer to their ``cosets"; within $G_k$ we will be considering left cosets.

\begin{defn}
    Let $k \ge 3$. We use $\Gamma_k=\Cay{G_k}{S_k}$, where 
    $$S_k=\{\tau a^i,\rho^i a^i:1 \le i \le k-1\} \cup \{\rho\tau a,\rho\tau a^{-1}\}.$$
\end{defn}

In everything that follows, calculations are performed modulo $k$ wherever they relate to powers of $a$ or of $\rho$.

To assist the reader's intuition we provide a couple of diagrams (\Cref{fig1} and \Cref{fig2}) that show the neighbours of $e$ and of $\tau$ in $\Gamma_5$, and explain what the cosets of each of the subgroups above looks like in these diagrams. Note that because $G_k$ is nonabelian, we can either produce a diagram in which left-multiplication by $\rho$ appears to act ``the same" on all cosets of $A_k$, or one in which left-multiplication by $\tau$ does, but not both. We make the former choice, so in both cosets of $H_k$, left-multiplication by $\rho_k$ takes us down by one row, but left-multiplication by $\tau$ reverses the order of the rows in each copy while interchanging the two copies of $H_k$.

In these figures, the cosets of $H_5$ are the left and right halves of each diagram. Each coset of $A_5$ is a row in one half of a diagram. Left cosets of $K_5$ are forward diagonals in the left half of a diagram, and backward diagonals in the right half, while left cosets of $K_5'$ are backward diagonals in the left half of a diagram, and forward diagonals in the right half. In the next section we will also consider left cosets of $\langle A_5, \tau\rangle$; these are rows that run all the way across a diagram.

\begin{figure}
    \begin{tikzpicture}
        \foreach \x in {0,...,4}{
        \foreach \y in {0,...,4}{
\node[draw,circle, fill=white,inner sep=2pt] (\x) at (\x,\y) {};
}}
        \foreach \x in {2,...,4}{
        \foreach \y in {2,...,4}{
\node at ($(\x,4-\y)+(0,0.3)$) {$\rho^\y a^\x$};
}}
        \foreach \y in {2,...,4}{
\node at (\y,3.3) {$\rho a^\y$};
\node at (\y,4.3) {$a^\y$};
\node at ($(0,4-\y)+(0,0.3)$) {$\rho^\y$};
\node at ($(1,4-\y)+(0,0.3)$) {$\rho^\y a$};
}
\node at (0,3.3) {$\rho$};
\node at (0,4.3) {$e$};
\node at (1,4.3) {$a$};
\node at (1,3.3) {$\rho a$};
\foreach \x in {1,...,4}
{
\node[draw,circle, fill=black,inner sep=2pt] at (\x,4-\x) {};
}
    \end{tikzpicture}
    \qquad
      \begin{tikzpicture}
        \foreach \x in {0,...,4}{
        \foreach \y in {0,...,4}{
\node[draw,circle, fill=white,inner sep=2pt] (\x) at (\x,\y) {};
}}
        \foreach \x in {2,...,4}{
        \foreach \y in {2,...,4}{
\node at ($(\x,4-\y)+(0,0.3)$) {$\rho^\y \tau a^\x$};
}}
        \foreach \y in {2,...,4}{
\node at (\y,3.3) {$\rho \tau a^\y$};
\node at (\y,4.3) {$\tau a^\y$};
\node at ($(0,4-\y)+(0,0.3)$) {$\rho^\y \tau$};
\node at ($(1,4-\y)+(0,0.3)$) {$\rho^\y \tau a$};
}
\node at (0,3.3) {$\rho \tau$};
\node at (0,4.3) {$\tau$};
\node at (1,4.3) {$\tau a$};
\node at (1,3.3) {$\rho \tau a$};
\foreach \x in {1,...,4}
{
\node[draw,circle, fill=black,inner sep=2pt] at (\x,4) {};
}
\node[draw,circle, fill=black,inner sep=2pt] at (1,3) {};
\node[draw,circle, fill=black,inner sep=2pt] at (4,3) {};
    \end{tikzpicture}
    \caption{The neighbours of $e$ in $\Gamma_5$ are indicated in black.}\label{fig1}
\end{figure}

\begin{figure}
    \begin{tikzpicture}
        \foreach \x in {0,...,4}{
        \foreach \y in {0,...,4}{
\node[draw,circle, fill=white,inner sep=2pt] (\x) at (\x,\y) {};
}}
        \foreach \x in {2,...,4}{
        \foreach \y in {2,...,4}{
\node at ($(\x,4-\y)+(0,0.3)$) {$\rho^\y a^\x$};
}}
        \foreach \y in {2,...,4}{
\node at (\y,3.3) {$\rho a^\y$};
\node at (\y,4.3) {$a^\y$};
\node at ($(0,4-\y)+(0,0.3)$) {$\rho^\y$};
\node at ($(1,4-\y)+(0,0.3)$) {$\rho^\y a$};
}
\node at (0,3.3) {$\rho$};
\node at (0,4.3) {$e$};
\node at (1,4.3) {$a$};
\node at (1,3.3) {$\rho a$};
\foreach \x in {1,...,4}
{
\node[draw,circle, fill=black,inner sep=2pt] at (\x,4) {};
}
\node[draw,circle, fill=black,inner sep=2pt] at (1,0) {};
\node[draw,circle, fill=black,inner sep=2pt] at (4,0) {};

    \end{tikzpicture}
    \qquad
      \begin{tikzpicture}
        \foreach \x in {0,...,4}{
        \foreach \y in {0,...,4}{
\node[draw,circle, fill=white,inner sep=2pt] (\x) at (\x,\y) {};
}}
        \foreach \x in {2,...,4}{
        \foreach \y in {2,...,4}{
\node at ($(\x,4-\y)+(0,0.3)$) {$\rho^\y \tau a^\x$};
}}
        \foreach \y in {2,...,4}{
\node at (\y,3.3) {$\rho \tau a^\y$};
\node at (\y,4.3) {$\tau a^\y$};
\node at ($(0,4-\y)+(0,0.3)$) {$\rho^\y \tau$};
\node at ($(1,4-\y)+(0,0.3)$) {$\rho^\y \tau a$};
}
\node at (0,3.3) {$\rho \tau$};
\node at (0,4.3) {$\tau$};
\node at (1,4.3) {$\tau a$};
\node at (1,3.3) {$\rho \tau a$};
\foreach \x in {1,...,4}
{
\node[draw,circle, fill=black,inner sep=2pt] at (\x,\x-1) {};
}
    \end{tikzpicture}
    \caption{The neighbours of $\tau$ in $\Gamma_5$ are indicated in black.}\label{fig2}
\end{figure}

We begin by determining the domination number of these graphs.

\begin{lem}\label{dom-gamma}
    When $k\ge 3$ is odd, $\gamma(\Gamma_k)=k$.
\end{lem}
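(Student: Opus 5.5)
The lower bound comes from counting closed neighbourhoods, and the upper bound from exhibiting the explicit dominating set $K'_k=\langle \rho a^{-1}\rangle$.

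\textbf{Lower bound.} $\Gamma_k$ has $2k^2$ vertices. The connection set $S_k\setminus\{e\}$ has $2(k-1)+2=2k$ elements, so each vertex dominates exactly $2k+1$ vertices. Since $(k-1)(2k+1)=2k^2-k-1<2k^2$, no $k-1$ vertices can dominate $\Gamma_k$, and hence $\gamma(\Gamma_k)\ge k$.

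\textbf{Upper bound.} I claim that $D=K'_k=\{\rho^j a^{-j}:0\le j\le k-1\}$ is a dominating set. Write $h_j=\rho^j a^{-j}$ and use the convention $1\in S$, so the vertices dominated by $h_j$ are $h_jS_k$. I treat the two cosets of $H_k$ separately.

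For the coset $H_k$, note that $h_j\rho^i a^i=\rho^{i+j}a^{i-j}$, and the case $i=0$ recovers $h_j$ itself. So $D$ dominates every $\rho^{i+j}a^{i-j}$ with $i,j\in\mathbb{Z}_k$. Because $k$ is odd, $2$ is invertible mod $k$, so the linear map $(i,j)\mapsto(i+j,i-j)$ is a bijection of $\mathbb{Z}_k^2$. Hence every $\rho^u a^w\in H_k$ is dominated.

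For the coset $\tau H_k$, write its elements as $\rho^u\tau a^w$. Since $a$ is central, $h_j\tau a^i=\rho^j\tau a^{i-j}$. As $i$ ranges over $1,\dots,k-1$, this covers the whole row $\rho^j\tau A_k$ except the single element $\rho^j\tau a^{-j}$. The remaining neighbours are $h_j\rho\tau a^{\pm1}=\rho^{j+1}\tau a^{-j\pm1}$. Taking the sign $-$ gives $\rho^{j+1}\tau a^{-(j+1)}$, which is exactly the element missed in row $j+1$ by $h_{j+1}$. Running $j$ over $\mathbb{Z}_k$, every row of $\tau H_k$ is therefore completely dominated.

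Combining both cosets, $D$ is a dominating set of size $k$, so $\gamma(\Gamma_k)=k$. The only real obstacle is guessing the right set $D$. The choice of $K'_k$ is motivated by a count: a vertex of $H_k$ dominates $k$ vertices in $H_k$ and $k+1$ in $\tau H_k$, so $k$ vertices chosen in $H_k$ can tile $H_k$ exactly. The diagonal $K'_k$ (rather than $K_k$, which is closed under the $\rho^ia^i$ moves) makes these tiles disjoint, using that $k$ is odd. The one delicate check is that the $\rho\tau a^{-1}$ edges patch the single hole left in each row of $\tau H_k$; this is routine bookkeeping with the relation $\tau\rho=\rho^{-1}\tau$.
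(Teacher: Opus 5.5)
Your proof is correct and follows the paper's argument essentially step for step. Both use the counting bound $(k-1)(2k+1)<2k^2$ and the dominating set $K'_k$. On $H_k$, your invertibility of $2$ modulo $k$ is the paper's observation that $K'_k$ meets each left coset of $K_k$ exactly once; on $\tau H_k$, the $\tau a^i$ edges cover each row except one vertex, which the $\rho\tau a^{-1}$ edge from the previous row supplies.
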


\begin{proof}
    Each vertex has $2(k-1)+2=2k$ neighbours, so dominates $2k+1$ vertices. Since $$(k-1)(2k+1)=2k^2-k-1<2k^2,$$ any dominating set must contain at least $k$ vertices.

    To show that $k$ vertices suffice, we claim that $D=K'_k$ is a dominating set for $\Gamma_k$. Note that these $k$ vertices all lie in distinct left cosets of $K_k$ (this requires $k$ odd), so  all vertices in each left coset of $K_k$ within $H_k$ are dominated by $D$. That is, all vertices of $H_k$ are dominated by $D$.

    The vertex $\rho^j\tau a^\ell$ is dominated by $\rho^j a^{-j}$ unless $\ell=-j$. If $\ell=-j$ then this vertex is dominated by $\rho^{j-1}a^{1-j}$ using the element $\rho\tau a^{-1}$ of the connection set. Thus every vertex that is not in $H_k$ is also dominated by $D$.

    Thus, $\gamma(\Gamma_k)=k$.
\end{proof}

Next we have a lemma that tells us a lot about the structure of any minimal dominating set.

\begin{lem}\label{cover-diagonals}
   Let $k \ge 3$ be odd, and let $D$ be a dominating set in $\Gamma_k$ of cardinality $k$. Then all elements of $D$ lie in the same coset of $H_k$. Furthermore, one of these $k$ elements lies in each left coset of $K_k$ within that coset of $H_k$.
\end{lem}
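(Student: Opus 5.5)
The plan is a counting argument based on how the closed neighbourhood $gS_k$ of a vertex splits between the two cosets of $H_k$. Recall that we include $e$ in $S_k$. The elements of $S_k$ lying in $H_k$ are $e$ and $\rho^ia^i$ for $1\le i\le k-1$, which together form exactly $K_k$. So a vertex $g$ dominates precisely the left coset $gK_k$ (which has $k$ elements) inside its own coset of $H_k$. The remaining $k+1$ elements $\tau a^i$ ($1\le i\le k-1$), $\rho\tau a$ and $\rho\tau a^{-1}$ lie in $\tau H_k$, so $g$ dominates $k+1$ vertices of the other coset of $H_k$.

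Write $p=|D\cap H_k|$ and $q=|D\cap \tau H_k|$, so $p+q=k$. The vertices of $D\cap H_k$ dominate at most $p$ left cosets of $K_k$ inside $H_k$. The remaining $k-p=q$ cosets, containing $qk$ vertices, must be dominated by the $q$ vertices of $D\cap\tau H_k$.

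Next I compute what a vertex $t=\tau h$ with $h\in H_k$ dominates in $H_k$. The part coming from $\{\tau a^i\}$ is $\tau h\tau\{a^i: 1\le i\le k-1\}$, which together with $\tau h\tau$ would be a full coset of $A_k$. Since $A_k\cap K_k=\{e\}$ and $|A_k|=k$ equals the number of $K_k$-cosets in $H_k$, this row meets each left coset of $K_k$ in exactly one point. So within the $q$ uncovered $K_k$-cosets, $t$ dominates at most $q$ row points plus the two extra points coming from $\rho\tau a^{\pm1}$. This gives $qk\le q\cdot q+2q$, i.e.\ $k\le q+2$, so $p\le 2$ whenever $q>0$. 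By symmetry, left multiplication by $\tau$ is a graph automorphism swapping the two cosets of $H_k$, so $q\le 2$ whenever $p>0$. If both $p,q>0$ then $k=p+q\le 4$, which forces $k=3$ and $\{p,q\}=\{1,2\}$.

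The main obstacle is this residual case $k=3$. I will handle it by explicit computation, using the automorphism given by left multiplication by $\tau$ to assume $p=2$ and $q=1$. Then one $K_3$-coset of $H_3$ must be dominated entirely by the single vertex $t\in\tau H_3$. By the count above, the row contributes one point, so both extra neighbours $t\rho\tau a$ and $t\rho\tau a^{-1}$ must land in that same $K_3$-coset as distinct new points. I expect a direct computation of these two elements, relative to the row $\tau h\tau A_3$, to show this is impossible. If that does not close the case, I will also use the symmetric constraint for $\tau H_3$: there the two vertices of $H_3$ must cover the $K_3$-cosets not covered by $t$, and the combined slack of $p+q=3$ vertices is too small. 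This proves that $D$ lies in a single coset of $H_k$.

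Finally, suppose $D$ lies in a single coset of $H_k$, say $H_k$ itself after applying the $\tau$-automorphism if necessary. Then every vertex of $H_k$ can only be dominated within $H_k$, and each $d\in D$ dominates exactly $dK_k$ there. The $k$ left cosets of $K_k$ in $H_k$ must therefore be covered by $k$ vertices, each covering only its own coset. Hence each coset contains exactly one element of $D$, which is the second statement.
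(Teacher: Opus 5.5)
Your intended computation for $k=3$ would succeed, but it is not carried out, so as written the lemma is unproved for $k=3$. Everything before that point is sound. Since $S_k\cap H_k=K_k$, each vertex dominates exactly its own left coset of $K_k$ inside its own coset of $H_k$. Your count over $q$ uncovered $K_k$-cosets correctly gives $k\le q+2$, hence $k\le 4$ when $p,q>0$.

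For $k=3$ you only say you \emph{expect} a direct computation to rule the case out, with an unspecified fallback. The missing fact is short. The two extra neighbours satisfy $(t\rho\tau a)^{-1}(t\rho\tau a^{-1})=a^{-2}$, which is a nonidentity element of $A_k$, and $A_k\cap K_k=\{e\}$. So they lie in the same coset $t\rho\tau A_k$ and therefore in \emph{distinct} left cosets of $K_k$. The relevant comparison is between the two extras themselves, not with the row. Combine this with your observation that the row $\tau h\tau A_k$ meets each $K_k$-coset at most once. Then every vertex of $\tau H_k$ dominates at most $2$ vertices of any single left coset of $K_k$ in $H_k$. For $k=3$, $p=2$, $q=1$, the lone vertex $t$ therefore cannot dominate the $3$ vertices of an uncovered coset. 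The case $p=1$, $q=2$ follows by your $\tau$-symmetry.

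With this per-coset bound the case split is unnecessary, and that is the paper's route. It fixes one left coset of $K_k$ in $H_k$ missed by $D\cap H_k$. It notes that the $H_k$-neighbours of a vertex of $\tau H_k$ lie in just two cosets of $A_k$, each meeting that $K_k$-coset once. So at least $(k+1)/2$ vertices of $D$ lie in $\tau H_k$, and symmetrically at least $(k+1)/2$ lie in $H_k$, giving $|D|\ge k+1$ uniformly for every odd $k\ge 3$. Your count over all uncovered cosets gives the stronger $q\ge k-2$ for larger $k$. However, its crude allowance of $2$ for the extra neighbours across the union of cosets is exactly what lets $k=3$ slip through. Your final step, one vertex of $D$ in each left coset of $K_k$, matches the paper.
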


\begin{proof}
Suppose $v \in H_k$.
Observe from the connection set that $v$ dominates all of the vertices in $vK_k$, and no other vertices of $H_k$. Similarly, if $v \notin H_k$ then $v$ dominates all of the vertices in $vK_k$, and no other vertices of $vH_k$.

Towards a contradiction, suppose that $D$ were a dominating set in $\Gamma_k$ of cardinality at most $k$, that contains at least one vertex in each of the two cosets of $H_k$.

Since $|D\cap H_k|<k$, there is some left coset of $K_k$ in $H_k$, none of whose vertices is dominated by any vertex of $D \cap H_k$. For concreteness, we may assume this left coset is $\rho^iK_k$. Therefore, all $k$ vertices in $\rho^iK_k$ must be dominated by vertices that do not lie in $H_k$.

Observe from the connection set that for any vertex $v$ of $D$ that lies in $\tau H_k$, the vertices of $H_k$ that it dominates lie in only two of the cosets of $A_k$ (the coset that contains $v\tau$ and the  coset that contains $v\rho\tau$). In particular, since each coset of $A_k$ in $H_k$ intersects each coset of $K_k$ in $H_k$ in a unique point, this implies that any vertex of $D$ that lies in $\tau H_k$ can dominate at most $2$ vertices in $\rho^iK_k$. Thus, we must include at least $\left\lceil\frac{k}{2}\right\rceil$ vertices of $\tau H_k$ in $D$, in order to dominate all of the vertices in $\rho^i K_k$.

An analogous argument shows that since $D$ contains fewer than $k$ vertices that lie in $\tau H_k$, there must be at least $\left\lceil\frac{k}{2}\right\rceil$ vertices of $H_k$ in $D$.

Since $k$ is odd, $\left\lceil\frac{k}{2}\right\rceil=\frac{k+1}{2}$, so this makes a total of at least $k+1$ distinct vertices in $D$, contradicting $|D| \le k$. With this contradiction, we conclude that $D \subset H_k$ or $D \subset \tau H_k$.

If $D \subset vH_k$, there cannot be a left coset of $K_k$ in $vH_k$ that does not contain an element of $D$, or none of its vertices would be dominated by any element of $D$.
\end{proof}

We still need additional information about the structure of minimal dominating sets.

\begin{lem}\label{cover-rows}
    Let $k \ge 5$ be odd, and let $D$ be a dominating set in $\Gamma_k$ of cardinality $k$ with $D \subset H_k$. Then at least one element of $D$ lies in each coset of $A_k$ in $H_k$.
\end{lem}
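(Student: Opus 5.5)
The plan is to prove the statement by counting how the vertices of the other coset $\tau H_k$ get dominated. Since $D\subset H_k$, every vertex of $\tau H_k$ must be dominated through one of the connection-set elements $\tau a^i$ ($1\le i\le k-1$), $\rho\tau a$ or $\rho\tau a^{-1}$. I will not need the full strength of \Cref{cover-diagonals} here. The argument uses only that $|D|=k$ and that $D\subset H_k$.

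\emph{Step 1: exact domination pattern.} Take $d=\rho^j a^m\in H_k$ and use that $a$ is central. Then
$$d\,\tau a^i=\rho^j\tau a^{m+i} \quad\text{and}\quad d\,\rho\tau a^{\pm1}=\rho^{j+1}\tau a^{m\pm1}.$$
So inside $\tau H_k$ the vertex $d$ dominates every vertex of the row $\rho^j\tau A_k$ except $\rho^j\tau a^m$. It also dominates exactly two vertices of the next row $\rho^{j+1}\tau A_k$, and nothing else in $\tau H_k$. Consequently the row $\rho^j\tau A_k$, which has $k$ vertices, is dominated only by elements of $D$ in $\rho^jA_k$ and by elements of $D$ in $\rho^{j-1}A_k$. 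The latter contribute at most $2$ vertices each.

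\emph{Step 2: an empty row forces a heavy predecessor.} Suppose some coset $\rho^jA_k$ contains no element of $D$. Then all $k$ vertices of $\rho^j\tau A_k$ must be dominated by elements of $D\cap\rho^{j-1}A_k$, each covering at most two of them. Hence $|D\cap \rho^{j-1}A_k|\ge\lceil k/2\rceil=(k+1)/2$, using that $k$ is odd.

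\emph{Step 3: count to a contradiction.} Let $e\ge1$ be the number of cosets of $A_k$ in $H_k$ missed by $D$. Distinct empty rows $\rho^jA_k$ have distinct predecessors $\rho^{j-1}A_k$, and each predecessor carries at least $(k+1)/2$ elements of $D$.
\begin{itemize}
\item If $e\ge2$, then $|D|\ge k+1$, a contradiction.
\item If $e=1$, the predecessor row carries at least $(k+1)/2$ elements. The remaining $k-2$ rows are nonempty, so they carry at least $k-2$ more. Thus $|D|\ge (k+1)/2+k-2=(3k-3)/2$, which exceeds $k$ exactly when $k>3$.
\end{itemize}
This is where the hypothesis $k\ge5$ is used. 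Note that for $k=3$ the bound is tight, so the statement genuinely needs $k\ge 5$.

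The only delicate point is the bookkeeping in Step 1. One must check that left multiplication really sends the $\rho\tau a^{\pm1}$-neighbours into the \emph{next} row, $\rho^{j+1}\tau A_k$ rather than $\rho^{j-1}\tau A_k$. This matters because it determines which row must be heavy, although the count works either way. It also requires that the two neighbours $\rho^{j+1}\tau a^{m+1}$ and $\rho^{j+1}\tau a^{m-1}$ are distinct, which holds because $k$ is odd. Once this pattern is established, the rest is the elementary count above.
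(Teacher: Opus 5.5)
Your proof is correct. Its first two steps are the same as the paper's: the same computation of what $\rho^j a^m$ dominates in $\tau H_k$ ($k-1$ vertices of $\rho^j\tau A_k$ and two of $\rho^{j+1}\tau A_k$), and the same conclusion that an empty row $\rho^iA_k$ forces at least $(k+1)/2$ elements of $D$ into $\rho^{i-1}A_k$.

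Only the finish differs. The paper fixes one empty row and notes that the heavy row $\rho^{i-1}A_k$ reaches only $\rho^{i-1}\tau A_k\cup\rho^i\tau A_k$. It then counts dominated vertices: the remaining at most $(k-1)/2$ elements each dominate $k+1$ vertices of $\tau H_k$, yet must cover the other $k(k-2)$. This gives $k^2-4k+1\le 0$. You instead count elements of $D$ directly, applying Step 2 to every empty row. Two empty rows give two disjoint heavy rows, so $|D|\ge k+1$. One empty row gives a heavy row plus $k-2$ nonempty rows, so $|D|\ge (3k-3)/2$. Your version needs the split on the number of empty rows, which the paper's count ignores. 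In exchange, it avoids counting dominated vertices in the rest of $\tau H_k$. Both arguments stop working at exactly $k=3$.

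One correction to your closing remark: tightness of your count at $k=3$ does not show that the statement needs $k\ge 5$. In fact the lemma also holds for $k=3$. There, the only configuration surviving your count has $2,0,1$ elements of $D$ in the rows $\rho^{i-1}A_3,\rho^iA_3,\rho^{i+1}A_3$. If $\rho^{i+1}a^m$ is the lone element of its row, then $\rho^{i+1}\tau a^m$ is dominated neither by its own row (by your Step 1) nor by the empty predecessor row $\rho^iA_3$. More generally, a row containing exactly one element of $D$ requires a nonempty predecessor row.
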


\begin{proof}
      Using~\Cref{cover-diagonals}, we see that one of the vertices of $D$ lies in each (left) coset of $K_k$ within $H_k$.

    Towards a contradiction, suppose that some coset of $A_k$ in $H_k$ contains no vertex of $D$; say this is the coset $\rho^i A_k$. Consider how the vertices of $\tau H_k$ can be dominated by elements of $D$. Any vertex of $D$ that is in $H_k$ and the coset $\rho^j A_k$ dominates $k-1$ vertices in $\rho^j \tau A_k$, and two vertices in $\rho^{j+1}\tau A_k$. It dominates no other vertices that are not in $H_k$. So in order to dominate all of the vertices in $\rho^i \tau A_k$, we must  have at least $\frac{k+1}{2}$ vertices of $D$ in $\rho^{i-1}A_k$.

    Observe that none of the vertices of $D$ that are in $\rho^{i-1}A_k$ dominate any vertex of $\tau H_k$ that is not in $\rho^{i-1}\tau A_k\cup \rho^i \tau A_k$. So the $k(k-2)$ vertices that are in $\tau H_k -(\rho^{i-1}A_k\cup \rho^i A_k)$ must be dominated by the remaining vertices of $D$. Since we have at least $\frac{k+1}{2}$ vertices of $D$ in $\rho^{i-1}A_k$, there are at most $\frac{k-1}{2}$ of these remaining vertices. Each vertex of $D$ dominates $k+1$ vertices of $\tau H_k$. So these remaining vertices of $D$ can dominate at most $\frac{k-1}{2}(k+1)$ vertices that are in $\tau H_k-(\rho^{i-1}A_k\cup \rho^i A_k)$. 

    Since $D$ is a dominating set, we conclude that $$\frac{k-1}{2}(k+1) \ge k(k-2),$$ which simplifies to $$k^2-4k+1 \le 0.$$ This implies $k(k-4) \le 0;$ since both factors are positive when $k\ge 5$, this inequality is never satisfied, implying that $D$ is not a dominating set. This contradiction completes the proof.
\end{proof}

This information is enough to determine (up to isomorphism) what every dominating set looks like.

\begin{lem}\label{dom-set-types}
    Let $k \ge 5$ be odd, and let $D$ be a dominating set in $\Gamma_k$ of cardinality $k$ that contains $e$ (the identity vertex). Then $D=K_k'$.
\end{lem}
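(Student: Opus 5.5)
By \Cref{cover-diagonals}, since $e\in D\cap H_k$, we have $D\subset H_k$, and $D$ meets each of the $k$ left cosets of $K_k$ in $H_k$ exactly once (there are $k$ cosets and $|D|=k$). By \Cref{cover-rows}, $D$ also meets each of the $k$ cosets $\rho^jA_k$ of $A_k$, hence exactly once each. So I will write
$$D=\{\rho^j a^{f(j)} : j\in\mathbb Z_k\}$$
for some function $f:\mathbb Z_k\to\mathbb Z_k$ with $f(0)=0$. Since $\rho^ja^m\in \rho^ia^iK_k=a^{i-j}\rho^jK_k$ exactly when the quantity $m-j$ is constant along the coset, the left coset of $K_k$ containing $\rho^ja^m$ is determined by $m-j \pmod k$. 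The condition from \Cref{cover-diagonals} therefore becomes: the map $j\mapsto f(j)-j$ is a bijection of $\mathbb Z_k$.

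Next I will extract the constraint coming from domination of $\tau H_k$, which is the only place the remaining structure is pinned down. Using that $a$ is central and $\tau$ inverts $\rho$, the neighbours of $d=\rho^ja^{f(j)}$ in $\tau H_k$ are computed as follows. The products $d\,\tau a^i=\rho^j\tau a^{f(j)+i}$ for $1\le i\le k-1$ give every vertex of the row $\rho^j\tau A_k$ except $\rho^j\tau a^{f(j)}$. The products $d\,\rho\tau a^{\pm1}$ give $\rho^{j+1}\tau a^{f(j)\pm1}$, using $\rho^j a^{f(j)}\rho\tau a^{\pm 1}=\rho^{j+1}\tau a^{f(j)\pm1}$. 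No vertex of $D$ is in $\tau H_k$, and $D$ has exactly one vertex per row. Hence the single vertex $\rho^j\tau a^{f(j)}$ left undominated by the row-$j$ element can only be dominated by the row-$(j-1)$ element via $\rho\tau a^{\pm1}$. This forces
$$f(j)=f(j-1)+\varepsilon_j,\qquad \varepsilon_j\in\{1,-1\},$$
for every $j\in\mathbb Z_k$, including the wraparound $j=0$. Carefully verifying this neighbour computation (the left/right multiplication bookkeeping) is the step I expect to need the most care, but it is a direct calculation.

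Finally I combine the two conditions. Setting $g(j)=f(j)-j$, we get $g(j)-g(j-1)=\varepsilon_j-1\in\{0,-2\}$. If some $\varepsilon_j=+1$, then $g(j)=g(j-1)$, contradicting the injectivity of $g$ (note $j\neq j-1$ as $k\ge 5$). Hence every $\varepsilon_j=-1$, and with $f(0)=0$ this gives $f(j)=-j$ for all $j$. Thus $D=\{\rho^ja^{-j}\}=K'_k$; this is consistent since summing the $\varepsilon_j$ gives $-k\equiv 0$. That $K'_k$ is indeed dominating was shown in \Cref{dom-gamma}, so the lemma follows.
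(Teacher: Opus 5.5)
Your proof is correct and follows the paper's argument: \Cref{cover-diagonals} and \Cref{cover-rows} give exactly one vertex of $D$ per row and per left coset of $K_k$; domination of $\rho^{j}\tau a^{f(j)}$ then forces $f(j)=f(j-1)\pm 1$; the $K_k$-coset condition excludes $+1$; and the values propagate from $e$ to give $D=K_k'$. One small slip: the displayed identity $\rho^ia^iK_k=a^{i-j}\rho^jK_k$ in your coset remark is garbled, but the invariant you actually use is right, since $\rho^ja^m(\rho a)^t=\rho^{j+t}a^{m+t}$.
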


\begin{proof}
    We begin by observing, from~\Cref{cover-diagonals}, that $D \subset H_k$. Also from~\Cref{cover-rows}, there is exactly one vertex of $D$ in each coset of $A_k$ in $H_k$.

    Suppose that the vertex of $D$ in $\rho^i A_k$ is $\rho^i a^j$. In $\rho^{i+1}A_k$, let the vertex of $D$ be $\rho^{i+1} a^\ell$. Then since $\rho^{i+1}\tau a^\ell$ is not dominated by $\rho^{i+1} a^\ell$, it must be dominated by $\rho^i a^j$. Therefore we must have $\ell \in \{j\pm1\}$.

    If $\ell=j+1$, then these two vertices of $D$ lie in the same coset of $K_k$, contradicting~\Cref{cover-diagonals}. So we must have $\ell=j-1$. Applying this inductive argument beginning with the vertex $e$ of $D$ gives $D=K_k',$ as claimed.
\end{proof}

Finally we are ready to consider the eternal domination number for these graphs.

\begin{thm}\label{etdom-bigger}
    Let $k\ge 5$ be odd. Then $\etdom(\Gamma_k) >\gamma(\Gamma_k)$.
\end{thm}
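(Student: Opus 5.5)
The plan is to show that the $k$ guards can never respond to an attack on any vertex of the coset of $H_k$ that contains none of them. First I pin down all minimum dominating sets. Suppose $D$ is a dominating set of cardinality $k=\gamma(\Gamma_k)$ (\Cref{dom-gamma}) and $g\in D$. Left multiplication by $g^{-1}$ is an automorphism of $\Gamma_k$, so $g^{-1}D$ is a dominating set of size $k$ containing $e$. By \Cref{dom-set-types}, $g^{-1}D=K_k'$, so $D=gK_k'$. Hence the dominating sets of size $k$ are exactly the $2k$ left cosets of $K_k'$: $k$ of them lie in $H_k$ and $k$ lie in $\tau H_k$.

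Suppose, for contradiction, that $k$ guards suffice. Every configuration occupied during the game is then a left coset of $K_k'$. Applying an automorphism, we may assume the guards occupy $D=K_k'=\{\rho^j a^{-j}:j\in\mathbb{Z}_k\}$. The attacker attacks any vertex of $\tau H_k$. The new configuration $D'$ must then be a left coset $gK_k'$ with $g=\rho^m\tau a^c$. Its elements are $\rho^m\tau a^c\rho^t a^{-t}=\rho^{m-t}\tau a^{c-t}$, so $D'$ meets each row $\rho^r\tau A_k$ in exactly one vertex, namely $\rho^r\tau a^{c-m+r}$. Since $D\subset H_k$ and $D'\subset\tau H_k$, every guard must move, and it must move along an edge labelled by $\tau a^i$ or $\rho\tau a^{\pm1}$.

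From the connection set, the guard at $\rho^j a^{-j}$ can reach two kinds of vertex. Using $\tau a^i$ with $i\neq 0$ it reaches $\rho^j\tau a^{i-j}$, which is any vertex of row $\rho^j\tau A_k$ other than $\rho^j\tau a^{-j}$. Using $\rho\tau a^{\pm1}$ it reaches one of the two vertices $\rho^{j+1}\tau a^{-j\pm1}$ of row $\rho^{j+1}\tau A_k$. The guards must end up in bijection with $D'$, which has one vertex per row. So the map sending guard $j$ to its new row, which is $j$ or $j+1$, must be a bijection of $\mathbb{Z}_k$. Such a map is either the identity or the shift $j\mapsto j+1$.

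\begin{itemize}
\item In the first case, we need $c-m+j\neq -j$ for every $j$. That is, $2j\neq m-c$ for all $j$, which is impossible because $2$ is invertible modulo the odd number $k$.
\item In the second case, we need $c-m+j+1\in\{-j+1,-j-1\}$ for every $j$. That is, $2j\in\{m-c,\,m-c-2\}$ for all $j\in\mathbb{Z}_k$. This is impossible, since $j\mapsto 2j$ takes $k\ge 5>2$ distinct values.
\end{itemize}

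Thus no reconfiguration to a dominating set of size $k$ containing the attacked vertex exists, and so $\etdom(\Gamma_k)>k=\gamma(\Gamma_k)$.

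I expect the main obstacle to be the first step, classifying all minimum dominating sets as cosets of $K_k'$. That step rests on \Cref{dom-set-types}, which needs $k\ge 5$. After it, the counting is a short computation with the connection set, and the only care needed is to get the neighbour formulas $\rho^j a^{-j}\cdot\rho\tau a^{\pm1}=\rho^{j+1}\tau a^{-j\pm1}$ right, using $a$ central and $\tau\rho=\rho^{-1}\tau$.
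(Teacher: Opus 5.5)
Your proof is correct, and its reduction matches the paper's. You use \Cref{dom-set-types} and vertex-transitivity to see that every size-$k$ dominating set is a left coset of $K_k'$, then place the guards on $K_k'$. The key step, however, is different.

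The paper fixes the single attack $v=\tau$, so the only admissible target is $\tau K_k'$. It then observes that two vertices of that target, $\tau$ and $\rho^{-1}\tau a^{-1}$, each have $\rho^{-1}a$ as their only dominator in $K_k'$. One guard would therefore have to fill both.

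You instead rule out a reconfiguration from $K_k'$ to \emph{any} left coset of $K_k'$ in $\tau H_k$ by tracking rows. The induced row map $j\mapsto j$ or $j\mapsto j+1$ must be a bijection of $\mathbb{Z}_k$, hence the identity or the shift, and both fail because $2$ is invertible modulo the odd $k$. This is essentially the argument the paper gives later in \Cref{moving} for $\Delta_k$, whose edges from $K_k'$ into $\tau H_k$ are the same as in $\Gamma_k$.

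The paper's route is shorter: one local collision check for one well-chosen attack. Yours yields a stronger, reusable fact: with $k$ guards on a coset of $K_k'$, the guards can never move to the other coset of $H_k$, so every attack on $\tau H_k$ wins. Your last step needs only $k\ge 3$; as you note, $k\ge 5$ enters only through \Cref{dom-set-types}.
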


\begin{proof}
    By~\Cref{dom-gamma}, $\gamma(\Gamma_k)=k$, so we need to show that $\etdom(\Gamma_k)>k$.

    We will show that for any dominating set $D$ of $\Gamma_k$ that has cardinality $k$, there exists a vertex $v$ such that guards from $D$ cannot be reconfigured to $D'$ for any dominating set $D'$ that contains $v$.

    Using the vertex-transitivity of $\Gamma_k$, we may assume that $D=K_k'.$ Let $v=\tau$. By~\Cref{dom-set-types} and the vertex-transitivity of $\Gamma_k$, there is a unique dominating set that contains $v$, and it is $\tau K_k'=\{\rho^{-i}\tau a^{-i}: 0 \le i \le k-1\}.$ So we must show that guards cannot reconfigure from $D=K_k'$ to $D'=\tau K_k'$.

    There is a unique vertex of $D$ that dominates $v$: namely, $\rho^{-1}a$. So if there were a reconfiguration, the defender on this vertex must move to $v$. 

    Observe that $D'=\tau K_k'$ also includes the vertex $\rho^{-1}\tau a^{-1},$ and $\rho^{-1}a$ is also the only vertex of $D$ that dominates this vertex. Therefore we cannot reconfigure the defenders from $D$ to $D'$.
\end{proof}

\begin{thm}
    Let $k >1$ be odd. Then $\etdom(\Gamma_k)=\gamma(\Gamma_k)+1$.
\end{thm}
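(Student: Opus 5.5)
The plan is to prove the two inequalities $\etdom(\Gamma_k)\ge k+1$ and $\etdom(\Gamma_k)\le k+1$ separately, using $\gamma(\Gamma_k)=k$ from \Cref{dom-gamma}. For the lower bound with $k\ge 5$, \Cref{etdom-bigger} gives $\etdom(\Gamma_k)>k$ directly. For $k=3$, Lemmas~\ref{cover-rows} and~\ref{dom-set-types} require $k\ge 5$, so this case must be handled separately. I expect $\Gamma_3$ to be the Cayley graph on $G_3$ studied in \cite{BSL2016}, and would cite their result for $k=3$. If the graphs do not match exactly, I would fall back on a direct finite check for $k=3$, rerunning the argument of \Cref{etdom-bigger} with the minimum dominating sets enumerated by hand.

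For the upper bound I would exhibit an explicit eternal dominating configuration of size $k+1$. By \Cref{dom-gamma} and vertex-transitivity (left translation), every left translate $gK_k'$ is a dominating set. I would take $\mathcal D$ to be the family of all sets $gK_k'\cup\{w\}$ with $g\in G_k$ and $w\notin gK_k'$. Each is a dominating set of size $k+1$, and it is natural to call $w$ the \emph{spare} guard.

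Given such a configuration and an attacked vertex $v$ outside it, I would first compute, for a fixed $D=gK_k'$, which translates $hK_k'$ can be reached from $D$ by moving each guard along an edge or keeping it in place. By left translation it suffices to take $g=e$, $D=K_k'$. The reachable translates should include $K_k's$ whenever $s$ is compatible with the rows structure, for instance certain right multiplications by $\rho^ia^i$ type elements, and also certain translates in $\tau H_k$. The proof of \Cref{etdom-bigger} shows the obstruction to reaching the unique target $\tau K_k'$ through $v=\tau$: a single vertex of $D$, namely $\rho^{-1}a$, is the only available source for two target vertices.

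The key step is therefore: for each possible attacked vertex $v$ (up to the symmetry fixing $K_k'$), choose a translate $hK_k'\ni v$ and a partial matching from the $k$ guards of $D$ into $hK_k'$ along edges that misses at most one vertex of $hK_k'$. The spare guard $w$ is then used as follows. If $w$ is adjacent to (or equals) the missed vertex, it fills that vertex; otherwise the unmatched guard of $D$ becomes the new spare (stepping anywhere). When neither works directly, $w$ itself is attacked or absorbed: if $v$ is dominated by $w$, move $w$ to $v$ and leave $D$ fixed, so that $v$ becomes the new spare. This last move already handles every attack in the closed neighbourhood of $w$.

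The main obstacle is the case analysis showing that for every $v$ not dominated by $w$, some translate containing $v$ is reachable from $D$ with at most one conflict, and that this conflict can be absorbed by the spare. I would first try to organise it by the coset of $A_k$ (row) and coset of $H_k$ containing $v$, using Figures~\ref{fig1} and~\ref{fig2}. The $\rho^ia^i$ edges shift along $K_k$-diagonals, and the $\tau a^i$ and $\rho\tau a^{\pm1}$ edges pass between the two halves with exactly the local "$\pm1$" defect seen in the proofs of \Cref{dom-set-types} and \Cref{etdom-bigger}. In each case the conflict should be a single vertex of the target, and the unmatched old guard can then be declared the new spare.
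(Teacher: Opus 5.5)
\textbf{The upper bound has a genuine gap.} Your lower bound is the same as the paper's: \Cref{etdom-bigger} for $k\ge5$ and \cite{BSL2016} for $k=3$. The trouble is the family you propose, all sets $gK_k'\cup\{w\}$ with an \emph{unrestricted} spare $w$, which is not an eternal dominating configuration.

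\emph{The fallback rule.} Your rule ``otherwise the unmatched guard of $D$ becomes the new spare'' does not rescue it. If the spare cannot reach the vertex of $hK_k'$ missed by the matching, that vertex is simply left empty.

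\emph{The obstruction is unavoidable.} Take $v=\rho^j\tau a^\ell$ with $\ell\neq-j$. The only left coset of $K_k'$ in $\tau H_k$ containing $v$ is $C=\{\rho^n\tau a^{n+\ell-j}\}$. With $2n_0=j-\ell$, its vertices $\rho^{n_0}\tau a^{-n_0}$ and $\rho^{n_0-1}\tau a^{-n_0-1}$ both have $\rho^{n_0-1}a^{1-n_0}$ as their only neighbour in $K_k'$. So the spare must dominate one of them, and an arbitrary spare need not.

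\emph{A concrete failure.} Take $k=9$ and the position $K_9'\cup\{a\}$. Your own absorbing rule produces this from $K_9'\cup\{\tau\}$ after an attack on $a$. Now attack $v=\rho^5\tau$. Among the guards only $\rho^5a^{-5}$ is adjacent to $v$, and in $\tau H_9$ the vertex $a$ is adjacent only to vertices of $\tau A_9\cup\rho\tau A_9$. There are three possible targets.
\begin{enumerate}
\item The new coset contains $v$. Then it is $\{\rho^n\tau a^{n+4}\}$, and its vertices $\rho^7\tau a^2$ and $\rho^6\tau a$ can only be filled by $\rho^6a^3$.
\item $v$ becomes the spare and the new coset lies in $H_9$. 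Guards staying in $H_9$ move within their left cosets of $K_9$, and $a$ only reaches $aK_9$. So the $K_9$-coset of $\rho^5a^{-5}$ stays empty.
\item $v$ becomes the spare and the new coset is $\{\rho^n\tau a^{n-2m}\}\subseteq\tau H_9$. All nine other guards must fill it. Then $a$ must fill $\rho^m\tau a^{-m}$ or $\rho^{m-1}\tau a^{-m-1}$, forcing $m\in\{0,1,2\}$. But $\rho^ma^{-m}$ and $\rho^{m+1}a^{-m-1}$ both have $\rho^{m+1}\tau a^{1-m}$ as their only neighbour in this coset, so one of them must be the guard sent to $v$, forcing $m\in\{4,5\}$.
\end{enumerate}
So this attack cannot be answered inside your family.

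\textbf{The missing idea} is to pin the spare relative to the coset. The paper does this with $\mathcal D=\{g(K_k'\cup\{\tau\}):g\in G_k\}$, i.e.\ the sets $C\cup\{x\tau\}$ with $x\in C$. After translating so the spare is $\tau$, it lies in $\tau K_k=\{\rho^n\tau a^{-n}\}$. That set contains every critical vertex $\rho^{n_0}\tau a^{-n_0}$ and is dominated by $\tau$. The price is that every response must land on a set of the same shape. That is why the paper answers an attack $\rho^ja^\ell$ in the same half by moving the coset to the other half; your freer family makes that case trivial, but fails elsewhere.

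\textbf{A caution if you follow the paper, and a related slip in your proposal.} Your suggested right translates are a trap: for $s\in\tau H_k$, $K_k's=sK_k$ is a left coset of $K_k$, not of $K_k'$, and is not dominating. The paper's written response to $v=\rho^j\tau a^\ell$ with $\ell\neq-j$ has the same slip. The set $\tau a^{\ell+j}D$ it names contains $v$ only when $j=0$. Its listed moves put the $k$ coset guards on $\tau a^{\ell+j}K_k$, which together with $a^{\ell+j}$ does not dominate $\tau H_k$.

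A correct response, with $2n_0=j-\ell$, is:
\begin{itemize}
\item move $\rho^na^{-n}$ to $\rho^n\tau a^{n+\ell-j}$ for $n\neq n_0$, using the connection element $\tau a^{2(n-n_0)}\in S_k$;
\item move $\tau$ to $\rho^{n_0}\tau a^{-n_0}$;
\item leave $\rho^{n_0}a^{-n_0}$ in place.
\end{itemize}
This yields $\rho^{n_0}\tau a^{-n_0}(K_k'\cup\{\tau\})$, which contains $v$.
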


\begin{proof}
    For $k=3$, this was shown in~\cite{BSL2016}; their paper does not include a proof of the claim that $\gamma(\Gamma_3) \le k+1$, but they checked this by computer and it can be verified. For the remainder of the proof, we may assume $k \ge 5$.

    By~\Cref{etdom-bigger}, $\etdom(\Gamma_k)>\gamma(\Gamma_k)=k$, so it suffices to show that there is an eternal dominating configuration whose sets have cardinality $k+1$. We claim that $$\mathcal D=\{g(K_k'\cup \{\tau\}): g \in G_k\}$$ is such a configuration. 

    Using the fact that $\mathcal D$ is invariant under left-multiplication by elements of $G_k$, we may assume without loss of generality that we start with the dominating set $D=K_k'\cup \{\tau\}$. We now consider a variety of possible vertices $v \notin D$. 

    Suppose that $v=\rho^j\tau a^{-j}$ for some $j$. Then moving the defender from $\tau$ to the adjacent vertex $\tau\rho^{-j}a^{-j}=\rho^j\tau a^{-j}=v$ and leaving all other defenders in place reconfigures the defenders to $\rho^ja^{-j}(K_k'\cup \tau) \in \mathcal D$. 

    Next suppose that $v=\rho^j\tau a^\ell$ for some $\ell \neq -j$. In this case, we will reconfigure the defenders to $\tau a^{\ell+j} D \in \mathcal D$. Note this includes $v=\tau a^{\ell+j}\rho^{-j}a^{-j}$.

    For every $i$, we reconfigure the defender from vertex $\rho^{j-i} a^{i-j}$ to the vertex $\rho^{j-i}\tau a^{\ell+i}$. Since $j \neq -\ell$, there is an edge between these vertices. We also move the defender from $\tau$ to $a^{\ell+j}$, which is adjacent since $\ell+j\neq 0$. These are exactly the vertices of $\tau a^{\ell+j}D$.
    
    Finally, suppose that $v=\rho^j a^\ell$ for some $\ell \neq -j$. Take $m$ to be the unique value modulo $k$ such that $2m = -j-\ell$. We will reconfigure the defenders to occupy the vertices of $\rho^{j+m}\tau a^{\ell+m}D \in \mathcal D$.
    
    Since $-(j+m)=\ell+m$, $\rho^{j+m} a^{\ell+m} \in D$ is in the same (left) coset of $K_k$ as $v=\rho^j a^{\ell}$ and therefore dominates it. We reconfigure the defender from $\rho^{j+m} a^{\ell+m}$ to $\rho^j a^\ell$. Also, for each $1 \le i \le k-1$, we reconfigure the defender from $\rho^{j+m+i}a^{-j-m-i}$ to $\rho^{j+m+i}\tau a^{\ell+m+i}$. We can do this as long as $-j-m-i \neq \ell+m+i$; that is, $2i+2m \neq -j-\ell$. Since $2m=-j-\ell$, this is equivalent to $2i\neq 0$; since $k$ is odd, this is equivalent to $i \neq 0$, and therefore does always work. Finally, we reconfigure the defender from $\tau$ to $\rho^{j+m}\tau a^{\ell+m}$. This edge exists because these vertices are in the same left coset of $K_k$ (as noted above, $-(j+m)=\ell+m$). These are the vertices of $\rho^{j+m}\tau a^{\ell+m}D$.  
\end{proof}

\section{Larger gaps}

For this section, we let $b$ be an element of order $2$ that centralises $G_k$, so $\langle G_k,b \rangle \cong D_{2k} \times C_{2k}$ has order $4k^2$. Our Cayley graphs will be defined on this group.

\begin{defn}
    For any odd $k\ge 3$, we define the family of Cayley graphs $\Delta_k$ as $\Delta_k=\Cay{\langle G_k,b \rangle}{T_k}$, where $T_k=(S_k\setminus\{\rho a,\rho^{-1}a^{-1}\}) \cup \{\rho ab,\rho^{-1}a^{-1}b\}$.
\end{defn}

In~\Cref{fig3} we provide an image showing the neighbours of the identity vertex $e$ in $\Delta_5$, similar to the image for $\Gamma_5$ that was provided earlier.

\begin{figure}
    \begin{tikzpicture}
        \foreach \x in {0,...,4}{
        \foreach \y in {6,...,10}{
\node[draw,circle, fill=white,inner sep=2pt] (\x) at (1.2*\x,\y) {};
}}
        \foreach \x in {2,...,4}{
        \foreach \y in {2,...,4}{
\node at ($(1.2*\x,10-\y)+(0,0.3)$) {$\rho^\y a^\x$};
}}
        \foreach \y in {2,...,4}{
\node at (1.2*\y,9.3) {$\rho a^\y$};
\node at (1.2*\y,10.3) {$a^\y$};
\node at ($(0,10-\y)+(0,0.3)$) {$\rho^\y$};
\node at ($(1.2,10-\y)+(0,0.3)$) {$\rho^\y a$};
}
\node at (0,9.3) {$\rho$};
\node at (0,10.3) {$e$};
\node at (1.2,10.3) {$a$};
\node at (1.2,9.3) {$\rho a$};
\foreach \x in {2,3}
{
\node[draw,circle, fill=black,inner sep=2pt] at (1.2*\x,10-\x) {};
}
        \foreach \x in {0,...,4}{
        \foreach \y in {0,...,4}{
\node[draw,circle, fill=white,inner sep=2pt] (\x) at (1.2*\x,\y) {};
}}
        \foreach \x in {2,...,4}{
        \foreach \y in {2,...,4}{
\node at ($(1.2*\x,4-\y)+(0,0.3)$) {$b\rho^\y a^\x$};
}}
        \foreach \y in {2,...,4}{
\node at (1.2*\y,3.3) {$b\rho a^\y$};
\node at (1.2*\y,4.3) {$ba^\y$};
\node at ($(0,4-\y)+(0,0.3)$) {$b\rho^\y$};
\node at ($(1.2,4-\y)+(0,0.3)$) {$b\rho^\y a$};
}
\node at (0,3.3) {$b\rho$};
\node at (0,4.3) {$b$};
\node at (1.2,4.3) {$ba$};
\node at (1.2,3.3) {$b\rho a$};
\node[draw,circle, fill=black,inner sep=2pt] at (1.2,3) {};
\node[draw,circle, fill=black,inner sep=2pt] at (1.2*4,0) {};

    \end{tikzpicture}
    \qquad
      \begin{tikzpicture}
        \foreach \x in {0,...,4}{
        \foreach \y in {0,...,4}{
\node[draw,circle, fill=white,inner sep=2pt] (\x) at (1.2*\x,6+\y) {};
}}
        \foreach \x in {2,...,4}{
        \foreach \y in {2,...,4}{
\node at ($(1.2*\x,10-\y)+(0,0.3)$) {$\rho^\y \tau a^\x$};
}}
        \foreach \y in {2,...,4}{
\node at (1.2*\y,9.3) {$\rho \tau a^\y$};
\node at (1.2*\y,10.3) {$\tau a^\y$};
\node at ($(0,10-\y)+(0,0.3)$) {$\rho^\y \tau$};
\node at ($(1.2,10-\y)+(0,0.3)$) {$\rho^\y \tau a$};
}
\node at (0,9.3) {$\rho \tau$};
\node at (0,10.3) {$\tau$};
\node at (1.2,10.3) {$\tau a$};
\node at (1.2,9.3) {$\rho \tau a$};
\foreach \x in {1,...,4}
{
\node[draw,circle, fill=black,inner sep=2pt] at (1.2\x,10) {};
}
\node[draw,circle, fill=black,inner sep=2pt] at (1.2,9) {};
\node[draw,circle, fill=black,inner sep=2pt] at (1.2*4,9) {};
        \foreach \x in {0,...,4}{
        \foreach \y in {0,...,4}{
\node[draw,circle, fill=white,inner sep=2pt] (\x) at (1.2*\x,\y) {};
}}
        \foreach \x in {2,4}{
        \foreach \y in {2,...,4}{
\node at ($(1.2*\x,4-\y)+(0,0.3)$) {$b\rho^\y \tau a^\x$};
}}
        \foreach \y in {2,...,4}{
\node at ($(1.2*3,4-\y)+(0,0.3)$) {$b\rho^\y \tau a^3$};
\node at (1.2*\y,3.3) {$b\rho \tau a^\y$};
\node at (1.2*\y,4.3) {$b\tau a^\y$};
\node at ($(0,4-\y)+(0,0.3)$) {$b\rho^\y \tau$};
\node at ($(1.2,4-\y)+(0,0.3)$) {$b\rho^\y \tau a$};
}
\node at (0,3.3) {$b\rho \tau$};
\node at (0,4.3) {$b\tau$};
\node at (1.2,4.3) {$b\tau a$};
\node at (1.2,3.3) {$b\rho \tau a$};

    \end{tikzpicture}
    \caption{The neighbours of $e$ in $\Delta_5$ are indicated in black.}\label{fig3}
\end{figure}

As before, cosets of interest will include cosets of $A_k$ and left cosets of $K_k$; now we also have two cosets of $G_k$, and two cosets of $\langle H_k, b\rangle$. Finally, we will be interested in left cosets of $$B_k=\langle A_k, \tau \rangle.$$

\begin{lem}\label{dom-delta}
    When $k \ge 3$ is odd, $\gamma(\Delta_k)=2k$.
\end{lem}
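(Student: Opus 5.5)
The equality $\gamma(\Delta_k)=2k$ splits into a counting lower bound and an explicit dominating set of size $2k$, modelled on $K_k'$ from \Cref{dom-gamma}.

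\textbf{Lower bound.} Since $T_k$ is obtained from $S_k$ by swapping two elements for two others, $|T_k|=|S_k|=2k$ (not counting the identity). So every vertex dominates exactly $2k+1$ vertices. The group $\langle G_k,b\rangle$ has order $4k^2$, and
\[(2k-1)(2k+1)=4k^2-1<4k^2,\]
so any dominating set has at least $2k$ vertices. This step is routine.

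\textbf{Upper bound.} I would show that $D=K_k'\cup bK_k'$ dominates $\Delta_k$, checking the four cosets $H_k$, $bH_k$, $\tau H_k$ and $b\tau H_k$ of $H_k$ separately. The key fact used throughout is that $b$ is a central involution, so left multiplication by $b$ is an automorphism of $\Delta_k$ swapping $G_k$ and $bG_k$. It therefore suffices to show that every vertex of $G_k$ is dominated by $D$.

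\emph{The coset $\tau H_k$.} The elements of $T_k$ lying in $\tau H_k$ are exactly those of $S_k$ there, namely $\tau a^i$ with $1\le i\le k-1$ and $\rho\tau a^{\pm1}$. The argument in \Cref{dom-gamma} for vertices outside $H_k$ used only these elements and the vertices of $K_k'$. Hence it transfers verbatim: $\rho^j\tau a^\ell$ is dominated by $\rho^ja^{-j}$, or by $\rho^{j-1}a^{1-j}$ when $\ell=-j$.

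\emph{The coset $H_k$.} Here the only change from $\Gamma_k$ is that a vertex $d\in K_k'$ no longer dominates $d\rho a$ and $d\rho^{-1}a^{-1}$ directly. It still dominates the rest of $dK_k$, since the elements $\rho^ia^i$ with $i\ne\pm1$ remain in $T_k$. For the two missing vertices I use the partner $bd\in bK_k'$. Because $b$ is central of order $2$,
\[bd\cdot\rho ab=d\rho a \quad\text{and}\quad bd\cdot\rho^{-1}a^{-1}b=d\rho^{-1}a^{-1},\]
and $\rho ab,\rho^{-1}a^{-1}b\in T_k$. The $k$ vertices of $K_k'$ lie in distinct left cosets of $K_k$ (as in \Cref{dom-gamma}, using $k$ odd), so every vertex of $H_k$ is dominated.

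Applying the automorphism $g\mapsto bg$ then handles $bH_k$ and $b\tau H_k$, since $D$ is invariant under it. Therefore $\gamma(\Delta_k)\le 2k$, and combined with the lower bound, $\gamma(\Delta_k)=2k$.

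\textbf{Main obstacle.} The only delicate point is the bookkeeping in the $H_k$ case. One must confirm that the removed generators $\rho a$ and $\rho^{-1}a^{-1}$ are precisely compensated by the $b$-translated generators $\rho ab$ and $\rho^{-1}a^{-1}b$ from the partner copy $bd$. One must also confirm that no element of $T_k$ inside $\tau H_k$ was altered, so that \Cref{dom-gamma} can be reused unchanged.
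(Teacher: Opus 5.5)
Your proof is correct and follows the paper's argument: the same counting bound $(2k-1)(2k+1)<4k^2$, and the same dominating set $\langle K_k',b\rangle=K_k'\cup bK_k'$, in which $bd$ covers the two vertices $d\rho a$ and $d\rho^{-1}a^{-1}$ that $d$ no longer dominates. You spell out the coset-by-coset check that the paper compresses into ``a similar argument holds''; one small correction is that left multiplication by any group element is an automorphism of a Cayley graph, so the centrality of $b$ is needed only for the identity $bd\cdot\rho ab=d\rho a$, not for the symmetry $g\mapsto bg$.
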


\begin{proof}
    Observe that $\langle K_k',b\rangle$ is a dominating set for $\Delta_k$ and has $2k$ vertices. (In $\Gamma_k$, $K_k'$ was a dominating set for all elements of $G_k$; the vertex $v$ no longer dominates $v\rho a$ or $v\rho^{-1}a^{-1}$ but these are now dominated by $vb$. A similar argument holds in the other coset of $G_k$.)

    Since $|T_k|=2k$, each vertex dominates $2k+1$ vertices. If $D$ is a set with $2k-1$ vertices, then the number of vertices that $D$ can dominate is therefore at most $(2k-1)(2k+1)=4k^2-1$ vertices, which is fewer than the total number of vertices. Therefore a dominating set must have cardinality at least $2k$.
\end{proof}

We now work to understand the dominating sets for $\Delta_k$ that contain $2k$ or $2k+1$ vertices. As a first step, we determine how they interact with the two cosets of $G_k$; that is, the top and bottom halves of our diagram.

\begin{lem}\label{balanced-copies}
    Suppose $k >5$ is odd. A dominating set of cardinality at most $2k+1$ for $\Delta_k$ must contain at least $k$ vertices in each coset of $G_k$.
\end{lem}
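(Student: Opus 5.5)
The plan is a direct counting argument that uses only the local structure of closed neighbourhoods in $\Delta_k$, much like the proof of \Cref{dom-delta}. First I will record how a single vertex dominates the two cosets of $G_k$. Since $b$ is central of order $2$, the connection set $T_k$ (with $e$ included, as in the paper's convention) splits as
\[
T_k \cap G_k = S_k\setminus\{\rho a,\rho^{-1}a^{-1}\}, \qquad T_k\cap bG_k=\{\rho ab,\rho^{-1}a^{-1}b\}.
\]
The first set has $2k-1$ elements, counting $e$, and the second has $2$. Hence a vertex $v$ dominates exactly $2k-1$ vertices of its own coset of $G_k$, namely $v(T_k\cap G_k)$. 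It dominates exactly $2$ vertices of the other coset, namely $v\rho ab$ and $v\rho^{-1}a^{-1}b$. This holds for $v$ in either coset, because left multiplication by $b$ is an automorphism of $\Delta_k$ swapping the two cosets.

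Next, argue by contradiction. Let $D$ be a dominating set with $|D|\le 2k+1$. By the symmetry under left multiplication by $b$, we may suppose that $m=|D\cap bG_k|\le k-1$. All $2k^2$ vertices of $bG_k$ must be dominated. The $m$ vertices of $D\cap bG_k$ dominate at most $m(2k-1)$ of them. The at most $2k+1-m$ vertices of $D\cap G_k$ dominate at most $2(2k+1-m)$ of them. Therefore
\[
2k^2 \le m(2k-1)+2(2k+1-m) = m(2k-3)+4k+2 .
\]

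The right-hand side is increasing in $m$, since $2k-3>0$. So it is at most
\[
(k-1)(2k-3)+4k+2 = 2k^2-k+5 .
\]
This is strictly less than $2k^2$ exactly when $k>5$, which is the hypothesis of the lemma. This gives the contradiction, so $|D\cap bG_k|\ge k$. The same argument with the cosets swapped gives $|D\cap G_k|\ge k$.

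There is no real obstacle here. The only points needing care are the exact neighbourhood split, in particular that $e$ is counted on the own-coset side and that $\rho ab$ and $\rho^{-1}a^{-1}b$ are distinct from everything else, and the observation that the bound is monotone in $m$. That monotonicity is what makes the case $m=k-1$ the extremal one, and that case is where the hypothesis $k>5$ enters.
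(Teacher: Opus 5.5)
Your proposal is correct and follows the paper's proof: the same neighbourhood split ($2k-1$ vertices dominated in a vertex's own coset of $G_k$, $2$ in the other), the same count $m(2k-3)+4k+2\le 2k^2-k+5<2k^2$ for $m\le k-1$, and the same use of $k>5$. The only difference is that you work with $bG_k$ rather than $G_k$, which is immaterial by the symmetry you cite.
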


\begin{proof}
    Let $D$ be a dominating set of cardinality at most $2k+1$ for $\Delta_k$. Note that by~\Cref{dom-delta}, the cardinality of $D$ must be either $2k$ or $2k+1$.
    
    Towards a contradiction, suppose that $D$ contains $\ell \le k-1$ vertices in one of the cosets of $G_k$, without loss of generality $G_k$ itself. Then in $G_k$, each of these $\ell$ vertices dominates at most $2k-1$ vertices, while the vertices in $bG_k$ (of which there are at most $2k+1-\ell$) each dominates just two vertices in $G_k$. So the total number of vertices in $G_k$ that are dominated is at most $\ell(2k-1)+4k+2-2\ell=(2k-3)\ell +4k+2$. Since $\ell \le k-1$, this is at most $$(2k-3)(k-1)+4k+2=2k^2-k+5<2k^2$$ since $k >5$. This is the desired contradiction.
\end{proof}

For our next results we define the concept of overlap. 

\begin{defn}
    Let $X$ be a set of vertices of a graph $\Gamma$, and let $\Gamma[v]$ denote the closed neighbourhood of a vertex $v$ of $\Gamma$, and $\Gamma[X]=\bigcup_{v \in X}\Gamma[v]$. The \emph{overlap} of $X$ is $$\ov(X)=\left(\sum_{v \in X}|\Gamma[v]|\right)-|\Gamma[X]|.$$

    Analogously, we can define the overlap within a subset of the vertices of $\Gamma$: if $Y$ is a subset of the vertices of $\Gamma$ then $$\ov_Y(X)=\left(\sum_{v \in X}|\Gamma[v] \cap Y|\right)-|\Gamma[X]\cap Y|.$$
\end{defn}

Intuitively, we can think of the overlap of a dominating set of vertices as measuring the flexibility we may have in adjusting that dominating set: it tells us something about how many vertices are dominated by more than one vertex of the dominating set. An efficient dominating set has an overlap of $0$. Overlap together with inclusion-exclusion arguments limit the number of vertices that can be dominated by a set of vertices.

\begin{lem}\label{overlap-Gk}
    Suppose $k \ge 3$ is odd, and $D$ is a dominating set for $\Delta_k$ that has $k$ vertices in $b^iG_k$, and either $k$ or $k+1$ vertices in $b^{1-i}G_k$, for some $i \in \{0,1\}$. Then $\ov_{b^iG_k}(D) \le k+2$ and $\ov_{b^{1-i}G_k}(D) \le 3k-1$.
\end{lem}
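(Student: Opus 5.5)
The plan is a direct counting argument. Since $D$ is dominating, $\Gamma[D]$ (closed neighbourhoods taken in $\Delta_k$) is the whole vertex set. Hence for each coset $Y$ of $G_k$ we have $|\Gamma[D]\cap Y| = |Y| = 2k^2$, and
$$\ov_Y(D)=\sum_{v\in D}|\Gamma[v]\cap Y| - 2k^2 .$$
So it suffices to compute $|\Gamma[v]\cap Y|$ exactly for each vertex $v$, according to whether $v$ lies in $Y$ or not, and then sum.

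First I determine how the closed neighbourhood of a vertex splits between the two cosets of $G_k$. We have $|T_k|=2k$, where $T_k$ is $S_k$ with $\rho a,\rho^{-1}a^{-1}$ replaced by $\rho ab,\rho^{-1}a^{-1}b$.
\begin{itemize}
\item The $2k-2$ elements of $T_k\cap G_k$ are all nontrivial, so together with $v$ itself, $v$ dominates exactly $2k-1$ vertices of its own coset $vG_k$.
\item The two elements $\rho ab,\rho^{-1}a^{-1}b$ lie in $bG_k$. They are distinct because $\rho a$ has odd order $k\ge 3$. So $v$ dominates exactly $2$ vertices of the other coset.
\end{itemize}
This is the same count already used in the proof of \Cref{balanced-copies}.

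Next I sum over $D$. Write $m\in\{k,k+1\}$ for the number of vertices of $D$ in $b^{1-i}G_k$.
\begin{itemize}
\item For $Y=b^iG_k$, the sum is
$$k(2k-1)+2m \le 2k^2-k+2k+2 = 2k^2+k+2,$$
which gives $\ov_{b^iG_k}(D)\le k+2$.
\item For $Y=b^{1-i}G_k$, the sum is
$$m(2k-1)+2k \le (k+1)(2k-1)+2k = 2k^2+3k-1,$$
which gives $\ov_{b^{1-i}G_k}(D)\le 3k-1$.
\end{itemize}

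There is no real obstacle here. The only point needing care is confirming that the two cross-coset neighbours are distinct and that the $2k-2$ same-coset neighbours are distinct from $v$; both follow since $T_k$ is a set of nonidentity elements, $\rho a$ has order $k$, and right multiplication is injective. Note that the bounds are in fact equalities when $m=k+1$, and hold with room to spare when $m=k$.
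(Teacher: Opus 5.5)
Your proposal is correct and is essentially the paper's own argument: you count $2k-1$ dominated vertices of $Y$ for each vertex of $D$ lying in $Y$ and $2$ for each vertex of $D$ in the other coset of $G_k$, use that $D$ dominates all $2k^2$ vertices of $Y$, and bound the resulting sums by $2k^2+k+2$ and $2k^2+3k-1$. Your extra checks that the cross-coset neighbours $v\rho ab$ and $v\rho^{-1}a^{-1}b$ are distinct, and that the bounds are attained when the second coset contains $k+1$ vertices of $D$, are correct details that the paper leaves implicit.
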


\begin{proof}
    Taking $Y=b^iG_k$ and $v \in D \cap b^iG_k$, we have $|\Delta_k[v] \cap Y|=2k-1$, while for $v \in D \cap b^{1-i}G_k$ we have $|\Delta_k[v]\cap Y|=2$. So $$\sum_{v \in D}|\Delta_k[v]\cap Y| \le k(2k-1)+(k+1)2=2k^2+k+2,$$ while since $D$ is a dominating set, $|\Delta_k[D] \cap Y| = |Y|=2k^2$. Thus, $\ov_Y(D) \le k+2$. 

     Similarly, taking $Y=b^{1-i}G_k$ and $v \in D \cap b^{1-i}G_k$, we have $|\Delta_k[v] \cap Y|=2k-1$, while for $v \in D \cap b^{i}G_k$ we have $|\Delta_k[v]\cap Y|=2$. So $$\sum_{v \in D}|\Delta_k[v]\cap Y| \le (k+1)(2k-1)+(k)2=2k^2+3k-1,$$ while since $D$ is a dominating set, $|\Delta_k[D] \cap Y| = |Y|=2k^2$. Thus, $\ov_Y(D) \le 3k-1$.
  \end{proof}

  The next lemma seems a bit technical but is intuitively fairly straightforward: if we increase the number of vertices in our set, and/or the set of neighbours we are examining for overlaps, this can't decrease the total overlap we find.

\begin{lem}\label{overlapping-subset}
    If $\Gamma$ is a graph, $D' \subseteq D$ and $Y' \subseteq Y$ are subsets of vertices, and $D$ is a dominating set, then $\ov_{Y'}(D') \le \ov_Y(D)$.
\end{lem}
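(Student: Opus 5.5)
The plan is to rewrite the overlap as a sum of nonnegative local contributions, one for each vertex of $Y$. Once it is in that form, monotonicity in both $D$ and $Y$ is immediate. For a set $X$ of vertices and a vertex $y$, let $m_X(y)=|\{v\in X : y\in \Gamma[v]\}|$ be the number of closed neighbourhoods of vertices of $X$ that contain $y$.

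First I would double count the pairs $(v,y)$ with $v\in X$, $y\in Y$ and $y\in\Gamma[v]$. Counting by $v$ gives $\sum_{v\in X}|\Gamma[v]\cap Y|$, and counting by $y$ gives $\sum_{y\in Y} m_X(y)$, so these are equal. Next, $\Gamma[X]\cap Y$ is exactly the set of $y\in Y$ with $m_X(y)\ge 1$. Combining the two facts gives
$$\ov_Y(X)=\sum_{y\in Y}\bigl(m_X(y)-[m_X(y)\ge 1]\bigr)=\sum_{y\in Y}\max\bigl(m_X(y)-1,\,0\bigr),$$
where $[\cdot]$ is $1$ if the condition holds and $0$ otherwise.

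With this formula the comparison is a two-step chain. Since $D'\subseteq D$, we have $m_{D'}(y)\le m_D(y)$ for every $y$. The function $t\mapsto\max(t-1,0)$ is nondecreasing, so each term for $D'$ is at most the corresponding term for $D$, and hence $\ov_{Y'}(D')\le \ov_{Y'}(D)$. Since every term is nonnegative and $Y'\subseteq Y$, adding the terms for $y\in Y\setminus Y'$ can only increase the sum, so $\ov_{Y'}(D)\le\ov_Y(D)$.

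I do not expect a real obstacle. The only step needing care is the double-counting identity, and it is routine. I also note that the argument never uses the hypothesis that $D$ is dominating, so the inequality holds for arbitrary sets $D'\subseteq D$. In the paper this hypothesis just matches the intended application, where the bounds of \Cref{overlap-Gk} on $\ov_{b^iG_k}(D)$ are passed down to subsets and sub-regions.
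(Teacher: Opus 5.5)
Your proof is correct, but it takes a different route from the paper's.

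The paper never localises the overlap. It splits $D$ into $D'$ and $D\setminus D'$, and $Y$ into $Y'$ and $Y\setminus Y'$. Using that $D$ dominates, it argues that
$\sum_{v\in D\setminus D'}|\Gamma[v]\cap Y|+\sum_{v\in D'}|\Gamma[v]\cap(Y\setminus Y')|+|\Gamma[D']\cap Y'|\ge |Y|$.
It then rearranges this global counting inequality, using $\Gamma[D]\cap Y=Y$ to recognise one side as $\ov_Y(D)$.

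Your identity $\ov_Y(X)=\sum_{y\in Y}\max(m_X(y)-1,0)$ replaces that bookkeeping with one structural fact: overlap is the total excess multiplicity of coverage. From it, monotonicity in both $X$ and $Y$ follows term by term.

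The formula also shows that the dominating hypothesis is superfluous, and you are right about that. The paper's argument would also work without it if $|Y|$ were replaced by $|\Gamma[D]\cap Y|$ throughout. The paper's version stays closer to the inclusion--exclusion form of the definition and proves only what its applications need, since there $D$ is always dominating. Yours is shorter, proves the more general statement, and shows directly that overlaps are nonnegative.
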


\begin{proof}
    Observe that $\sum_{v \in D-D'} |\Gamma[v]\cap Y|$ must count every vertex of $Y$ that is dominated by something in $D-D'$. Also, $\sum_{v \in D'}|\Gamma[v]\cap(Y-Y')|$ counts every vertex of $Y-Y'$ that is dominated by something in $D'$. Since $D$ is a dominating set, every vertex of $Y-Y'$ is counted in one of these sums. Finally, $\Gamma[D']\cap Y'$ includes every vertex of $Y'$ that is dominated by something in $D'$, so again since $D$ is a dominating set, every vertex of $Y'$ is included either in the first sum or this set.

    We conclude that $$\left(\sum_{v \in D-D'} |\Gamma[v]\cap Y|\right)+\left(\sum_{v \in D'}|\Gamma[v]\cap(Y-Y')|\right)+|\Gamma[D']\cap Y'| \ge |Y|.$$
    Notice that $$\left(\sum_{v \in D-D'} |\Gamma[v]\cap Y|\right)+\left(\sum_{v \in D'}|\Gamma[v]\cap(Y-Y')|\right)=\left(\sum_{v \in D}|\Gamma[v]\cap Y|\right)-\left(\sum_{v \in D'} |\Gamma[v]\cap Y'|\right),$$ so substituting this into our original inequality gives
    $$\left(\sum_{v \in D}|\Gamma[v]\cap Y|\right)-\left(\sum_{v \in D'} |\Gamma[v]\cap Y'|\right)+|\Gamma[D']\cap Y'| \ge |Y|.$$
    Rearranging gives $$\left(\sum_{v \in D}|\Gamma[v]\cap Y|\right)-|Y|\ge \left(\sum_{v \in D'} |\Gamma[v]\cap Y'|\right)-|\Gamma[D']\cap Y'|;$$ since $D$ is a dominating set we have $\Gamma[D]\cap Y=Y$, so this is equivalent to $\ov_Y(D)\ge \ov_{Y'}(D')$.
\end{proof}

The bounds we present in our next lemma are not optimal, but in this context it seems more useful to have a clear proof of a weaker bound, than a finicky proof of a tighter bound. We will only actually use this lemma when $i$ and $j$ differ by exactly $2$ modulo $k$, but since the same proof holds as long as $i$ and $j$ are distinct we provide the more general statement.

\begin{lem}\label{rows-small}
    Suppose $k \ge 3$ is odd, and $D$ is a dominating set for $\Delta_k$ that has $k$ vertices of $G_k$ and either $k$ or $k+1$ vertices of $bG_k$.

    Then for any $0 \le i<j \le k-1$ we have $$|D\cap (\rho^iB_k\cup \rho^jB_k)| \le 5,$$ and
    $$|D \cap b(\rho^iB_k\cup \rho^jB_k)| \le 6.$$
\end{lem}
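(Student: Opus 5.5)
The plan is a counting argument with overlaps. Take $X=D\cap(\rho^iB_k\cup\rho^jB_k)$ and the window $Y'=\rho^iB_k\cup\rho^jB_k$, and play a lower bound on $\ov_{Y'}(X)$ against the upper bounds of \Cref{overlap-Gk}. The two sides are linked by \Cref{overlapping-subset}. The same argument, translated by $b$, handles the second inequality.

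\textbf{Step 1: local structure of $B_k$-cosets.} Each left coset $\rho^iB_k=\rho^iA_k\cup\rho^i\tau A_k$ has $2k$ vertices, and two distinct cosets are disjoint, so $|Y'|=4k$. I will check that every $u\in\rho^iB_k$ has at least $k$ vertices of $\rho^iB_k$ in its closed neighbourhood, using only the connection-set elements $\tau a^t$ with $1\le t\le k-1$. These elements are unchanged in passing from $S_k$ to $T_k$.
\begin{itemize}
\item For $u=\rho^ia^m$ we have $u\tau a^t=\rho^i\tau a^{m+t}$. Hence $u$ dominates itself together with the $k-1$ vertices of $\rho^i\tau A_k$ other than $\rho^i\tau a^m$.
\item For $u=\rho^i\tau a^m$ we have $u\tau a^t=\rho^ia^{t-m}$. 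Hence $u$ dominates itself together with $k-1$ vertices of $\rho^iA_k$.
\end{itemize}
In both cases $|\Delta_k[u]\cap\rho^iB_k|\ge k$. Since $b$ is central, the identical computation holds inside $b\rho^iB_k$.

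\textbf{Step 2: lower bound on the overlap.} Put $x=|X|$. By Step 1,
$$\sum_{v\in X}|\Delta_k[v]\cap Y'|\ge xk,$$
while $|\Delta_k[X]\cap Y'|\le |Y'|=4k$. Therefore $\ov_{Y'}(X)\ge (x-4)k$. The same bound holds with $X$ and $Y'$ replaced by their translates $bX$-type sets inside $b(\rho^iB_k\cup\rho^jB_k)$.

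\textbf{Step 3: comparison.} By hypothesis $D$ has $k$ vertices in $G_k$ and $k$ or $k+1$ vertices in $bG_k$. So \Cref{overlap-Gk} applies with $i=0$ and gives
$$\ov_{G_k}(D)\le k+2, \qquad \ov_{bG_k}(D)\le 3k-1.$$
Since $X\subseteq D$ and $Y'\subseteq G_k$, \Cref{overlapping-subset} gives $\ov_{Y'}(X)\le \ov_{G_k}(D)\le k+2$. If $x\ge 6$, Step 2 would give $\ov_{Y'}(X)\ge 2k>k+2$ for $k\ge 3$, a contradiction. Hence $x\le 5$.

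In the coset $bG_k$ the same reasoning compares $(x-4)k$ with $3k-1$. If $x\ge 7$ the overlap would be at least $3k>3k-1$, so $x\le 6$.

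\textbf{Where care is needed.} The only delicate point is Step 1: the neighbourhood computation must really land inside the same $B_k$-coset, and it must use only generators that survive in $T_k$. The rest is bookkeeping with \Cref{overlapping-subset}. I do not expect a serious obstacle, because the deliberately weak bound $|\Delta_k[X]\cap Y'|\le 4k$ suffices and no finer inclusion–exclusion is needed.
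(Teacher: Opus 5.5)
Your proposal is correct and is essentially the paper's proof. You bound $\ov_{Y'}(X)$ below by $(x-4)k$, using that each vertex dominates $k$ vertices of its own $B_k$-coset and that $|Y'|=4k$. You then compare this with the bounds $k+2$ and $3k-1$ from \Cref{overlap-Gk} via \Cref{overlapping-subset}.

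There is one harmless slip. Since $a$ is central, $\rho^i\tau a^m\cdot\tau a^t=\rho^ia^{m+t}$, not $\rho^ia^{t-m}$. This does not change the count of $k-1$ neighbours in $\rho^iA_k$.
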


\begin{proof}
    By~\Cref{overlap-Gk}, $\ov_{G_k}(D) \le k+2$. 

     Suppose that $|D\cap (\rho^iB_k\cup \rho^jB_k)|\ge 6.$  Note that for every $\ell$, any vertex in $\rho^\ell B_k$ dominates $k$ vertices in $\rho^\ell B_k$. Since there are $2k$ vertices in $\rho^\ell B_k$, this implies that $$\ov_{\rho^iB_k\cup \rho^jB_k}(D\cap (\rho^iB_k\cup \rho^jB_k)) \ge 6k-4k=2k.$$
    Since $k\ge 3$, we have $2k>k+2\ge\ov_{G_k}(D)$. This is a contradiction using~\Cref{overlapping-subset}, as $D\cap (\rho^iB_k\cup \rho^jB_k) \subset D$ and $\rho^iB_k\cup \rho^jB_k \subset G_k$. So there can be at most $5$ vertices in $D\cap (\rho^iB_k\cup \rho^jB_k)$.
    
    
    Similarly, if $D$ has $k+1$ vertices in $bG_k$, then by~\Cref{overlap-Gk}, $\ov_{bG_k}(D) \le 3k-1$.
    
      Suppose that $|D\cap b(\rho^iB_k\cup \rho^jB_k)|\ge 7.$ Note that for every $\ell$, any vertex in $b\rho^\ell B_k$ dominates $k$ vertices in $b\rho^\ell B_k$. Since there are $2k$ vertices in $b\rho^\ell B_k$, this implies that $$\ov_{b(\rho^iB_k\cup \rho^jB_k)}(D\cap b(\rho^iB_k\cup \rho^jB_k)) \ge 7k-4k=3k.$$
    Since $3k>3k-1$, this is a contradiction using~\Cref{overlapping-subset}, as $D \cap b(\rho^iB_k\cup \rho^jB_k) \subset D$ and $b(\rho^iB_k\cup \rho^jB_k) \subset bG_k$. So there can be at most $6$ vertices in $D\cap b(\rho^iB_k\cup \rho^jB_k)$.
\end{proof}

We use these bounds to further understand the structure of dominating sets in $\Delta_k$ that are either minimal, or within one of being minimal. 

As a first step, we consider how these dominating sets interact with left cosets of $B_k$; that is, the rows in \Cref{fig3}. Although the following result may be true for some smaller values of $k$, our goal is simply to find an infinite family so we have not attempted to verify what happens when $3 \le k < 17$.

\begin{lem}\label{rows-covered}
    Suppose $k \ge 17$ is odd. A dominating set of cardinality at most $2k+1$ for $\Delta_k$ must contain at least one vertex in each left coset of $B_k$.
\end{lem}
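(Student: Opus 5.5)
The plan is to argue by contradiction. Suppose $D$ is a dominating set of cardinality at most $2k+1$ and some left coset $R=gB_k$ contains no vertex of $D$. I will then count how many of the $2k$ vertices of $R$ can possibly be dominated from outside $R$, and show that this count is less than $2k$.

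By~\Cref{balanced-copies}, $D$ has at least $k$ vertices in each coset of $G_k$. Since $|D|\le 2k+1$, one coset has exactly $k$ vertices of $D$ and the other has $k$ or $k+1$. So the hypotheses of~\Cref{rows-small} hold, with the roles of the two cosets of $G_k$ interchanged if necessary. By vertex-transitivity we may also move the empty row wherever is convenient.

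The first step is to record, from $T_k$, how a vertex $w$ meets the rows $w\rho^iB_k$ and $wb\rho^iB_k$. I will use left-multiplication and the fact that $\tau a^{\pm1}\in B_k$.
\begin{itemize}
\item For $i\ne 0,\pm1$, the elements $\rho^ia^i$ and $\rho^{-i}a^{-i}$ give exactly one neighbour of $w$ in each of the rows $w\rho^{\pm i}B_k$ (identified with $i$ modulo $k$).
\item The elements $\rho\tau a$ and $\rho\tau a^{-1}=(\rho\tau a)^{-1}$ give two neighbours in one adjacent row, namely $w\rho B_k$ in appropriate coordinates.
\item The elements $\rho ab$ and $\rho^{-1}a^{-1}b$ give one neighbour in each of two rows $wb\rho^{\pm1}B_k$ of the other coset of $G_k$.
\end{itemize}
Consequently, a vertex of $D$ in the same coset of $G_k$ as $R$ dominates at most one vertex of $R$, except for the vertices lying in one specific neighbouring row $R^*$ (relative to $R$), each of which dominates two. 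A vertex of $D$ in the other coset of $G_k$ dominates a vertex of $R$ only if it lies in one of the two rows $bR^{\pm}$ adjacent to $bR$ there, and then it dominates exactly one. I need to check carefully how left cosets $\rho^j\tau$ versus $\rho^j$ behave under left multiplication so that "adjacent row" is identified correctly. I expect this bookkeeping, rather than the counting, to be the fiddly part.

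The second step is the count. There are at most $k+1$ vertices of $D$ in $R$'s coset of $G_k$. Each contributes at most one dominated vertex of $R$, plus one extra for each vertex of $D$ in $R^*$. Applying~\Cref{rows-small} to the pair $R^*$ and any other row bounds $|D\cap R^*|$ by $6$ (or $5$). In the other coset, the two rows $bR^{+}$ and $bR^{-}$ are two distinct rows, so by~\Cref{rows-small} together they contain at most $6$ vertices of $D$. Hence the number of dominated vertices of $R$ is at most $(k+1)+6+6=k+13$.

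Since $k+13<2k$ whenever $k\ge 15$, and in particular for $k\ge17$, not all of $R$ is dominated. This contradicts $D$ being dominating. The small amount of slack in the hypothesis $k\ge 17$ absorbs any off-by-one in the constants, such as whether a row meets its own coset or whether $R^*$ also receives a contribution from the $b$-coset. The main obstacle is step one: pinning down exactly which rows receive two neighbours, so that only a bounded number of rows contribute more than one dominated vertex each.
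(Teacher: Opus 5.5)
Your single counting argument works, but one structural claim in your first step is false as stated.

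\textbf{The error.} Take $R=\rho^iB_k$; the case $R\subseteq bG_k$ follows by left-multiplying by $b$. The vertices $w$ of $R$'s coset that dominate two vertices of $R$ are those with $w\rho B_k=R$, i.e.\ $w\in R\rho^{-1}$. This is a right translate, not a left coset of $B_k$. Since $\rho^m\tau\rho=\rho^{m-1}\tau$, it equals $\rho^{i-1}A_k\cup\rho^{i+1}\tau A_k$, which is half of each of the two rows adjacent to $R$. The other halves, $\rho^{i-1}\tau A_k$ and $\rho^{i+1}A_k$, dominate nothing in $R$. So there is no single row $R^*$: vertices in both neighbouring rows can contribute two. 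This is the same left-versus-right pitfall you flagged as the fiddly part.

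\textbf{The repair.} Apply \Cref{rows-small} to the pair $\rho^{i-1}B_k,\rho^{i+1}B_k$ (the ``differ by $2$'' use the paper intends). This bounds the number of doubly-dominating vertices of $D$ by $6$, or by $5$ if $R$'s coset contains only $k$ vertices of $D$. Your bound $(k+1)+6+6=k+13<2k$ then stands. Your description of the other coset is correct: only vertices in $b\rho^{i\pm1}B_k$ contribute, one each.

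\textbf{Comparison with the paper.} With this fix your route is genuinely different from the paper's, and simpler. The paper bounds each doubly-dominating vertex's contribution by $3$. That value is right in $\Gamma_k$, where $\rho a,\rho^{-1}a^{-1}$ are in the connection set, but in $\Delta_k$ it is only an upper bound; your value $2$ is exact.

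The cruder count suffices directly only when $R$ lies in a coset with exactly $k$ vertices of $D$, giving $k+10$ plus at most $6$; this is where the paper's threshold $k\ge 17$ comes from. In the coset with $k+1$ vertices, the crude count gives $k+13$ plus up to $5$ from the other coset, which fails to beat $2k$ at $k=17$. So the paper argues in two stages. It first proves exactly one vertex per row in the $k$-coset. It then uses that only two vertices of $D$ lie in $\rho^{i-1}B_k\cup\rho^{i+1}B_k$ there.

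Your exact per-vertex count handles both cosets with one inequality. It also shows the lemma already holds for odd $k\ge 15$, given the earlier lemmas. The paper's two-stage structure gains nothing except tolerance of the looser per-vertex bound.
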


\begin{proof}
    Let $D$ be a dominating set of cardinality at most $2k+1$. By~\Cref{balanced-copies}, $D$ must have $k$ vertices in one coset of $G_k$ and either $k$ or $k+1$ vertices in the other coset. Without loss of generality, we may assume that $G_k$ contains $k$ vertices of $D$.

    Suppose that there is some $i$ such that in $G_k$, $\rho^iB_k$ has no vertices of $D$. Each vertex of $D \cap G_k$ dominates one vertex of $\rho^iB_k$, unless it lies in either $\rho^{i-1}A_k$ or $\rho^{i+1}\tau A_k$, in which case it dominates $3$ vertices of $\rho^iB_k$. By~\Cref{rows-small}, there can be at most $5$ vertices in $D\cap G_k$ that dominate $3$ vertices of $\rho^i A_k$. This implies that in total, the vertices of $D\cap G_k$ dominate at most $5(3)+(k-5)1=k+10$ vertices of $\rho^iB_k$.
    
    Since there are a total of $2k$ vertices in $\rho^iB_k$, the remaining vertices (of which there are at least $k-10$) must be dominated by vertices in $D \cap bG_k$. Observe that a vertex of $D\cap bG_k$ dominates a vertex in $\rho^iB_k$ only if it lies in either $b\rho^{i-1}B_k$, or $b\rho^{i+1}B_k$, and in either case it dominates exactly one vertex of $\rho^i A_k$. Thus there must be at least $k-10$ vertices of $D$ in $b(\rho^{i-1}B_k\cup \rho^{i+1}B_k)$. However,~\Cref{rows-small} tells us that there are at most $6$ vertices in this set, so $k-10\le 6$, contradicting $k \ge 17$.

    Since the same argument can also be applied to $bG_k$ if it has $k$ vertices of $D$, we conclude that in any coset of $G_k$ that has $k$ vertices, there is a vertex in each coset of $A_k$ within that coset of $G_k$ (and therefore exactly one such vertex).

    We must now consider the possibility that $D$ has $2k+1$ vertices. Without loss of generality using~\Cref{balanced-copies} and symmetry, we may assume that $|G_k \cap D|=k$ and $|bG_k\cap D|=k+1$. Suppose that $D\cap b\rho^iB_k=\emptyset$. 
    Again observe that the only vertices in $D$ that can dominate more than one vertex of $b\rho^iB_k$ are the vertices of $D$ that lie in $b\rho^{i-1}B_k\cup b\rho^{i+1}B_k$. By~\Cref{rows-small}, there are at most $6$ of these. Furthermore, each of these dominates exactly $3$ vertices of $b\rho^iB_k$. So the number of vertices of $b\rho^iB_k$ that are dominated by vertices of $D \cap bG_k$ is at most $6(3)+(k+1-6)=k+13$. 

 We conclude that at least $k-13 \ge 4$ vertices of $D\cap G_k$ must dominate vertices in $b\rho^iB_k$. But this implies that each of these vertices lies in $\rho^{i-1}B_k\cup \rho^{i+1}B_k$. We have concluded above that there is exactly one vertex of $D$ in each set $\rho^jB_k$, which means there are exactly two vertices of $D$ in $\rho^{i-1}B_k\cup \rho^{i+1}B_k$, and each dominates exactly one vertex of $b\rho^iB_k$, so they do not dominate at least $4$ such vertices. This is the contradiction that completes the proof.
\end{proof}

We still need a greater understanding of what these fairly small dominating sets look like, in order to determine whether or not they can allow eternal domination of the graph. We next consider how they interact with the cosets of $\langle H_k, b \rangle$.
These are the left and right halves of \Cref{fig3}.

\begin{lem}\label{same-coset}
    Suppose $k \ge 17$ is odd. A dominating set $D$ of cardinality at most $2k+1$ for $\Delta_k$ must contain at least $2k$ vertices in the same coset of $\langle H_k,b\rangle$. Furthermore, if there is a vertex of $D$ in the other coset of $\langle H_k,b\rangle$, then it is in a coset of $G_k$ that contains $k+1$ vertices of $D$.
\end{lem}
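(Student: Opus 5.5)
The plan is to reduce everything to counting how the four cosets of $H_k$ in $\langle G_k,b\rangle$ are dominated. These cosets are $H_k$, $\tau H_k$, $bH_k$ and $b\tau H_k$; each has $k^2$ vertices. The two cosets of $\langle H_k,b\rangle$ are $H_k\cup bH_k$ and $\tau H_k\cup b\tau H_k$.

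First I will read off from $T_k$ how the closed neighbourhood of a vertex $v$ meets these four blocks:
\begin{itemize}
\item $v$ dominates exactly $k-1$ vertices of its own block, namely $vK_k$ minus $v\rho a$ and $v\rho^{-1}a^{-1}$;
\item it dominates $k+1$ vertices of the block $v\tau H_k$ in the same coset of $G_k$, via $\tau a^i$, $\rho\tau a$ and $\rho\tau a^{-1}$;
\item it dominates $2$ vertices of the block $vbH_k$, via $\rho ab$ and $\rho^{-1}a^{-1}b$;
\item it dominates nothing in the fourth block.
\end{itemize}
The crucial point is that the $b$-edges stay inside a coset of $\langle H_k,b\rangle$.

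By \Cref{balanced-copies} and symmetry, assume $|D\cap G_k|=k$ and $|D\cap bG_k|\in\{k,k+1\}$. Write $x_1,x_2,y_1,y_2$ for the numbers of vertices of $D$ in $H_k,\tau H_k,bH_k,b\tau H_k$, so that $x_1+x_2=k$ and $y_1+y_2\in\{k,k+1\}$. Dominating each block gives four inequalities, for example
\[(k-1)x_1+(k+1)x_2+2y_1\ge k^2\]
for $H_k$, with analogous ones for the other three blocks.

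Step one is to show that the $k$ vertices of $D\cap G_k$ lie in a single block, i.e.\ $x_1=0$ or $x_2=0$. Here I would imitate \Cref{cover-diagonals}. Suppose $0<x_1<k$. Then some left coset of $K_k$ inside $H_k$, of size $k$, contains no vertex of $D$. Its vertices can only be dominated in three ways:
\begin{itemize}
\item by vertices of $D\cap\tau H_k$, each covering at most $2$ of them, since they hit only two rows and each row meets a $K_k$-coset once;
\item by vertices of $D\cap bH_k$, each covering at most $2$ of them;
\item by no vertex of $D\cap H_k$, since such a vertex covers only within its own $K_k$-coset.
\end{itemize}
Symmetrically, an empty $K_k$-coset in $\tau H_k$ forces contributions from $H_k$ and $b\tau H_k$. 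To close this argument I would bring in \Cref{rows-covered}, which says $D\cap G_k$ has exactly one vertex in each row $\rho^iB_k$, together with \Cref{rows-small}, which says at most $6$ vertices of $D\cap bG_k$ lie in any two rows. Since the relevant $b$-neighbours of an empty $K_k$-coset lie in boundedly many rows, the at least $k$ missing vertices must be covered by about $(x_2+\text{const})\cdot 2$ dominations. Requiring this on both sides forces $x_1,x_2\gtrsim (k-c)/2$ each, and with $x_1+x_2=k$ and $k\ge17$ the leftover slack should contradict the overlap bound $\ov_{G_k}(D)\le k+2$ from \Cref{overlap-Gk} via \Cref{overlapping-subset}. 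I expect this step to be the main obstacle: the $b$-contributions must be controlled carefully, and I will try bounding them row-by-row with \Cref{rows-small} first.

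Step two: say $D\cap G_k\subseteq H_k$, so $x_2=0$ and $x_1=k$. The block $\tau H_k$ then receives at most $k(k+1)+2y_2$ dominations, which is fine. The block $H_k$, however, receives only
\[(k-1)k+2y_1\]
of them, so $y_1\ge k/2$. Feeding this and the analogous inequalities for $bH_k$ and $b\tau H_k$ into the block counts, and using $y_1+y_2\le k+1$, I aim to show $y_2\le 1$.

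For that I would rerun the Step one argument inside $bG_k$. If $bG_k$ contains exactly $k$ vertices of $D$, the argument applies verbatim and puts them in a single block, which the $H_k$ inequality forces to be $bH_k$. If it contains $k+1$ vertices, the extra vertex gives exactly the slack that allows one vertex in $b\tau H_k$, and no more. This would give both the $2k$ vertices in $H_k\cup bH_k$ and the claim that a stray vertex lies in the coset of $G_k$ holding $k+1$ vertices of $D$.
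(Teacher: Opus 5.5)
Your skeleton matches the paper's: first force the $k$ vertices of $D\cap G_k$ into one coset of $H_k$ using an empty left coset of $K_k$, then count how $H_k$ is dominated. However, Step one is never actually carried out, and the tool you propose for it does not work.

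\textbf{The $b$-contribution.} The vertices of $bG_k$ that dominate anything in an empty coset $\rho^iK_k$ are exactly those of $b\rho^iK_k$, since $bg$ dominates only $g\rho a$ and $g\rho^{-1}a^{-1}$ in $G_k$. But $b\rho^iK_k$ has one vertex in \emph{every} row $b\rho^jB_k$. So these vertices do not lie in boundedly many rows, and \Cref{rows-small} gives no useful bound on how many of them are in $D$. What you need is a bound on $|D\cap vK_k|$ for a single left coset of $K_k$. The paper gets $|D\cap vK_k|\le 4$ from $5(k-2)-k=4k-10>3k-1$ together with \Cref{overlap-Gk} and \Cref{overlapping-subset}. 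Hence $D\cap bG_k$ covers at most $8$ vertices of an empty $\rho^iK_k$.

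\textbf{Why the two-sided bound does not close.} Even with that bound, one empty coset on each side only gives $x_1,x_2\ge (k-8)/2$, which is compatible with $x_1+x_2=k$. The overlap bound cannot supply the contradiction by itself. Since $D$ dominates, $\ov_{G_k}(D)=k(2k-1)+2|D\cap bG_k|-2k^2$ exactly, so $\ov_{G_k}(D)\le k+2$ is just a restatement of the counts you already used.

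\textbf{The missing idea.} You must exploit that there are \emph{many} empty $K_k$-cosets. The paper uses $x_2\ge 3$ to get at least three empty cosets in $H_k$. A vertex of $\tau H_k$ covers two vertices in at most two of them and at most one in the others. This pushes the lower bound to $(3k-12)/4$ on each side, giving $k\le 12$.

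An alternative finish sums over all $m\ge x_2$ empty cosets in $H_k$. Each vertex of $\tau H_k$ covers at most $m+2$ vertices of their union, so $mk\le x_2(m+2)+2y_1$, hence $x_1x_2\le 2x_2+2y_1$. Symmetrically $x_1x_2\le 2x_1+2y_2$, so $x_1x_2\le 2k+1$. This contradicts $x_1,x_2\ge (k-8)/2$ once $k\ge 17$.

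\textbf{Step two.} This rests on a miscount. The set $vK_k$ minus $v\rho a$ and $v\rho^{-1}a^{-1}$ has $k-2$ elements, not $k-1$; the four block counts must sum to $|T_k|+1=2k+1$. With the correct value, once $D\cap G_k\subseteq H_k$, the vertices of $D\cap H_k$ cover at most $k(k-2)$ vertices of $H_k$. The remaining $2k$ must be covered two at a time from $bH_k$, so $y_1\ge k$ immediately. Hence $y_2\le |D\cap bG_k|-k\le 1$, with $y_2=1$ only when $bG_k$ holds $k+1$ vertices of $D$. This is exactly the paper's finish, and it gives both assertions of the lemma.

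Your detour of rerunning Step one inside $bG_k$ is therefore unnecessary. In the $k+1$ case it is also unsupported: the hypothesis $|D\cap G_k|=k$ that drives Step one is unavailable there, and ``the extra vertex gives exactly the slack'' is not an argument.
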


\begin{proof}
    By~\Cref{balanced-copies}, there is a coset of $G_k$ that has $k$ elements of $D$. Without loss of generality, we may assume this is $G_k$ itself. Towards a contradiction, suppose that at least one of these $k$ vertices lies in each of the two cosets of $H_k$ in $G_k$. In particular, this means that some coset of $K_k$ that lies in $H_k$ contains no vertex of $D$ (and the same is true for some coset of $K_k$ that lies in $\tau H_k$). Say this coset is $\rho^iK_k$.

    Any vertex of $D$ that lies in $bG_k$ can dominate at most $2$ vertices of $\rho^iK_k$; furthermore, all vertices from $bG_k$ that dominate vertices in $\rho^iK_k$ must themselves lie in $\rho^ibK_k$. By~\Cref{overlap-Gk}, $\ov_{bG_k}(D) \le 3k-1$. 
    
    We now make an argument similar to those in the proof of~\Cref{overlap-Gk}, with respect to cosets of $K_k$. We plan to show that any coset of $K_k$ contains at most $4$ vertices of $D$. Consider the coset $vK_k$. By~\Cref{overlap-Gk}, $\ov_{vG_k}(D) \le 3k-1$. Suppose $|D\cap vK_k| \ge 5$. Since every vertex in $vK_k$ dominates $k-2$ vertices in $vK_k$, this implies that $$\ov_{vK_k}(D \cap vK_k) \ge 5(k-2)-k=4k-10.$$ Since $k>9$ we have $4k-10>3k-1$, a contradiction using~\Cref{overlapping-subset}. This establishes that each coset of $K_k$ contains at most $4$ vertices of $D$, as claimed. In fact, putting slightly more thought into this analysis leads to the conclusion that if there are $4$ vertices of $D$ in one coset of $K_k$ in $bG_k$, then there is at most one vertex of $D$ in any other coset of $K_k$ in $bG_k$, and more generally the total number of ``extra" vertices (beyond one) of $D$ in all cosets of $K_k$ within $bG_k$ is at most $3$.

We conclude that vertices of $D \cap bG_k$ dominate at most $8$ vertices of $\rho^iK_k$. So the remaining at least $k-8$ vertices of $\rho^iK_k$ must be dominated by vertices of $D \cap G_k$. 

Since $D \cap \rho^iK_k=\emptyset$, no vertex from $D \cap H_k$ dominates any vertex of $\rho^iK_k$. Any vertex in $\tau H_k$ dominates at most $2$ vertices of $\rho^iK_k$. So there must be at least $(k-8)/2$ vertices in $D \cap \tau H_k$.
 Since $k \ge 14$ we have $(k-8)/2\ge 3$. So there are at least $3$ vertices in $D\cap \tau H_k$, which  means that there are at most $k-3$ vertices in $D\cap H_k$, since there are $k$ vertices in $D\cap G_k$. In particular, this means that there are actually at least $3$ cosets of $K_k$ in $H_k$ that have no vertices of $D$. We repeat the same argument with respect to these other cosets that have no vertices of $D$. If there were only $(k-8)/2$ vertices in $D\cap \tau H_k$, then each dominate $2$ vertices in at most 2 of the empty cosets of $K_k$, so together with vertices from $D \cap bG_k$ could account for dominating all vertices of $\rho^iK_k$, and $k-8+2=k-6$ of the vertices from a second coset that has no vertices of $D$. In the third coset that has no vertices of $D$, the vertices of $D\cap bG_k$ could dominate $2$ and the $(k-8)/2$ vertices in $D\cap \tau H_k$ each dominates only one, leaving only $(k-4)/2$ vertices dominated, so at least $(k+4)/2$ that need to be dominated. This means that we need another at least $(k+4)/4$ vertices in $D\cap \tau H_k$ to complete the domination of this coset of $K_k$. We conclude that $D$ has at least $(k-8)/2+(k+4)/4=(3k-12)/4$ vertices in $D \cap \tau H_k$. 

    We can make exactly the same argument with respect to empty cosets of $K_k$ in  $\tau H_k$, so there must also be at least $(3k-12)/4$ vertices in $D$ that are in $H_k$. Since $G_k$ contains $k$ vertices of $D$, we conclude $(3k-12)/2 \le k$, implying $k \le 12$, a contradiction.

    This contradiction implies that all $k$ vertices in $G_k$ that lie in $D$, must lie in a single coset of $H_k$; without loss of generality and for clarity, suppose that they lie in $H_k$. Observe that these vertices dominate exactly $k(k-2)=k^2-2k$ of the vertices in $H_k$, so the other $2k$ vertices of $H_k$ must be dominated by other vertices of $D$, all of which lie in $bG_k$. The only vertices of $D$ in  $bG_k$ that dominate any vertices in $H_k$ are vertices in $bH_k$, and each of these dominates exactly $2$ vertices in $H_k$, so there must be at least $k$ vertices in $bG_k$ that lie in $b H_k$. This makes a total of $2k$ vertices in $\langle H_k, b\rangle$, as claimed, including $k$ from each coset of $G_k$.
\end{proof}

We still need more information, particularly about how the elements of the dominating set lie within the coset of $\langle H_k,b\rangle$ that contains almost all of them.

\begin{lem}\label{delta-cover-diags}
    Suppose $k \ge 17$ is odd and $D$ is a dominating set for $\Delta_k$ with at most $2k+1$ vertices. In the coset of $\langle H_k, b\rangle$ that contains at least $2k$ vertices of $D$, there is at least one vertex of $D$ in each left coset of $K_k$.
\end{lem}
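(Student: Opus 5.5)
Without loss of generality (using vertex-transitivity and the last part of \Cref{same-coset}), I will assume that $|D\cap G_k|=k$, that all of $D\cap G_k$ lies in $H_k$, and that at least $k$ vertices of $D\cap bG_k$ lie in $bH_k$. The proof of \Cref{same-coset} already shows that the $k$ vertices of $D\cap H_k$ lie in the $k$ distinct left cosets of $K_k$ within $H_k$. The argument is the same as in \Cref{cover-diagonals}. A vertex of $H_k$ dominates inside $H_k$ only the vertices of its own $K_k$-coset except $v\rho a,v\rho^{-1}a^{-1}$. Those two vertices are now reached via $b$ from $bH_k$, and any $bG_k$-vertex dominates at most $2$ vertices of a fixed $K_k$-coset of $H_k$. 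So the vertices of $H_k$ dominated by $D\cap H_k$ number at most $k^2-2k$, with equality only if no two vertices of $D\cap H_k$ share a $K_k$-coset. If some coset $\rho^iK_k\subseteq H_k$ contained no vertex of $D$, then all $k$ of its vertices would have to be dominated from $bG_k$. Such dominators lie in $b\rho^iK_k$, each covering at most $2$ vertices, and by the bound on coset occupancy from the proof of \Cref{same-coset} (at most $4$ per $K_k$-coset in $bG_k$) they cover at most $8<k$. This contradiction shows every left coset of $K_k$ in $H_k$ meets $D$.

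The main work is the $bH_k$ half. Let $E=D\cap bH_k$, so $|E|\in\{k,k+1\}$. Suppose some left coset $b\rho^iK_k$ of $K_k$ in $bH_k$ contains no vertex of $D$. Its $k$ vertices must be dominated by three kinds of vertices: vertices of $E$, which dominate nothing in a different $K_k$-coset of $bH_k$ except via the two $b$-free generators $\rho\tau a^{\pm1}$ and $\tau a^j$, and those leave $bH_k$; vertices of $D$ in $b\tau H_k$, of which there is at most one, each covering at most $2$ vertices of the coset; and vertices of $D\cap H_k$, each covering at most $2$ vertices of the coset and only if it lies in $\rho^iK_k$. Since exactly one vertex of $D\cap H_k$ lies in $\rho^iK_k$, at most $2+2=4<k$ vertices of $b\rho^iK_k$ are dominated. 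This is a contradiction, so every left coset of $K_k$ in $bH_k$ meets $D$.

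The main obstacle is making the dominator count in that last step airtight. It requires carefully identifying, from $T_k$, exactly which vertices outside $bH_k$ can dominate a vertex of $b\rho^iK_k$: the $b$-generators $\rho ab,\rho^{-1}a^{-1}b$ connect $bK_k$-cosets to $K_k$-cosets of $H_k$, while the $\tau$-type generators go to $b\tau H_k$. I expect the counts to be constants independent of $k$, so the hypothesis $k\ge17$ gives ample room.
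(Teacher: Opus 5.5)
Your proof is correct. It follows the paper's two-step outline: first the $K_k$-cosets in $H_k$, then those in $bH_k$, with the second step using that $D\cap H_k$ has exactly one vertex per $K_k$-coset. Both steps, however, are argued differently from the paper.

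\textbf{The $H_k$ step.} The paper uses a global count. $D\cap bG_k$ dominates at most $2k+2$ vertices of $H_k$. If two vertices of $D\cap H_k$ shared a $K_k$-coset, $D\cap H_k$ would dominate at most $(k-2)^2+k$ vertices of $H_k$, so fewer than $k^2$ would be dominated in total. You instead take an empty coset $\rho^iK_k$ and invoke the bound of at most $4$ vertices of $D$ per $K_k$-coset. That bound is valid, but it is only established as an intermediate claim inside the proof of \Cref{same-coset}, so your step leans on it, whereas the paper's count is self-contained. Also, that proof does not really ``already show'' the distinct-cosets property; it just asserts ``exactly $k(k-2)$''. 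Since you re-derive it yourself, this does no harm.

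\textbf{The $bH_k$ step.} The paper runs a forcing argument in the opposite direction. The two vertices $v\rho a$ and $v\rho^{-1}a^{-1}$ of each $\rho^iK_k$ that its unique $D$-vertex $v$ fails to dominate can only be dominated from $b\rho^iK_k$. This immediately puts a vertex of $D$ there, with no counting and no need to look at $b\tau H_k$. Your count for an empty $b\rho^iK_k$ is also correct: at most $2$ vertices come from the single vertex of $D\cap\rho^iK_k$, and at most $2$ from the at most one vertex of $D$ in $b\tau H_k$, giving $4<k$. It needs the extra bookkeeping that $|D\cap b\tau H_k|\le 1$ and that a vertex of $b\tau H_k$ dominates at most two vertices of any $K_k$-coset of $bH_k$. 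In exchange, you treat both halves with the same ``empty coset'' template.
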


\begin{proof}
    Consider first a coset of $G_k$ that has at most $k$ vertices of $D$; without loss of generality suppose this is $G_k$. The vertices of $D\cap G_k$ all lie in one coset of $H_k$; without loss of generality, we may assume this is $H_k$. The vertices of $D$ in $bG_k$ dominate at most $2(k+1)=2k+2$ vertices from $H_k$. If two of the vertices in $D \cap H_k$ lie in the same left coset of $K_k$, then they may dominate all $k$ vertices in this left coset but no other vertices in $H_k$, while any other of the $k-2$ vertices of $D\cap H_k$ dominates at most $k-2$ vertices of $H_k$, all of which lie in its own left coset of $K_k$. Thus, the total number of vertices in $H_k$ dominated by the vertices of $D\cap H_k$ would be at most $(k-2)(k-2)+k=k^2-3k+4$. Adding $2k+2$ to this gives a maximum of $k^2-k+6<k^2$ vertices of $H_k$ dominated. So in order to dominate all vertices of $H_k$, the vertices of $D$ that lie in $H_k$ must all be in distinct left cosets of $K_k$.

    Now in each left coset of $K_k$ in $H_k$, say $\rho^iK_k$, the element of $D$ in $\rho^iK_k$ dominates $k-2$ of the vertices of $\rho^iK_k$, so there remain exactly $2$ vertices that need to be dominated by vertices of $D$ that lie in $bG_k$. In order to dominate the other two vertices in $\rho^iK_k$, the dominating vertex must lie in $\rho^ibK_k$. Thus for each $i$, there must be an element of $D$ in $\rho^ibK_k$ also.
\end{proof}

We are finally ready to show that there is very little flexibility in the structure of a dominating set for $\Delta_k$ that has at most $2k+1$ vertices.

\begin{prop}\label{dom-set-structure}
    Suppose $k \ge 17$ is odd and $D$ is a dominating set for $\Delta_k$ with at most $2k+1$ vertices that includes exactly $k$ vertices of $G_k$, one of which is $e$. Then $D$ contains $\langle K_k',b\rangle$.
\end{prop}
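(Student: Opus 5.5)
The plan is to pin down $D\cap G_k$ first, by adapting the argument of~\Cref{dom-set-types}, and then use the lost edges $\rho a,\rho^{-1}a^{-1}$ to pin down $D\cap bG_k$.

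\textbf{Step 1: $D\cap G_k\subset H_k$, one vertex per row and per diagonal.} The proof of~\Cref{same-coset} shows that the $k$ vertices of $D\cap G_k$ lie in a single coset of $H_k$. Since $e\in D$, that coset is $H_k$. It also shows that $bH_k$ contains at least $k$ vertices of $D$. So $D\cap b\tau H_k$ has at most one vertex, and only when $|D\cap bG_k|=k+1$. Call this possible vertex $w$. By~\Cref{delta-cover-diags}, $D\cap H_k$ has exactly one vertex in each left coset of $K_k$ in $H_k$. By~\Cref{rows-covered}, and since $D\cap\tau H_k=\emptyset$, it has exactly one vertex in each coset $\rho^iA_k$. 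Write this vertex as $\rho^ia^{j_i}$. The condition on $K_k$-cosets says the values $j_i-i$ are pairwise distinct modulo $k$.

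\textbf{Step 2: the consecutive-row relation.} Look at the vertex $\rho^{i+1}\tau a^{j_{i+1}}$. Each $\tau$-type element of $T_k$ is still present in $\Delta_k$. So, as in~\Cref{dom-set-types}, this vertex is not dominated by $\rho^{i+1}a^{j_{i+1}}$. Among vertices of $H_k$, the only possible dominator is $\rho^ia^{j_i}$, via $\rho\tau a^{\pm1}$, and this forces $j_{i+1}=j_i\pm1$. Vertices of $bH_k$ cannot dominate anything in $\tau H_k$, because the $b$-elements $\rho ab,\rho^{-1}a^{-1}b$ of $T_k$ preserve cosets of $H_k$. So the only other possible dominator is $w$, which dominates exactly two vertices of $\tau H_k$: $w\rho ab$ and $w\rho^{-1}a^{-1}b$.

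Call an index $i$ a \emph{failure} if $j_{i+1}\ne j_i\pm1$. At every non-failure, $j_{i+1}=j_i+1$ is excluded, since then $j_i-i=j_{i+1}-(i+1)$ and the two vertices would share a $K_k$-coset. Hence $j_{i+1}=j_i-1$ there.

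\textbf{Step 3: ruling out failures (the main obstacle).} I expect this step to be the hardest. I will check directly that the two vertices $w\rho ab$ and $w\rho^{-1}a^{-1}b$ lie in rows $\rho^{r+1}\tau A_k$ and $\rho^{r-1}\tau A_k$, and compare their $a$-exponents with the required forms $\rho^{i+1}\tau a^{j_{i+1}}$. The aim is to show $w$ can rescue at most one failure; if the computation does not give this directly, I will fall back on combining it with the distinctness of the $j_i-i$.

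Granting at most one failure, summing the differences around the cycle of rows gives
\[
(k-1)(-1)+d\equiv 0 \pmod k,
\]
where $d$ is the difference at the failure. This forces $d\equiv-1$, so there is no failure after all. Starting from $j_0=0$ we get $j_i=-i$ for all $i$, so $D\cap G_k=K_k'$.

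\textbf{Step 4: $\langle K_k',b\rangle\subseteq D$.} In $\Delta_k$ the vertex $\rho^ia^{-i}$ no longer dominates $\rho^ia^{-i}\rho a$ or $\rho^ia^{-i}\rho^{-1}a^{-1}$. Since $k$ is odd, no other vertex of $K_k'$ lies in the $K_k$-coset of these vertices, so none dominates them. They must therefore be dominated from $bH_k$, and this gives $2k$ vertices to cover.

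For each such vertex I will list its two neighbours in $bH_k$. For $\rho^ia^{-i}\rho a$ these are $b\rho^ia^{-i}$ and $b\rho^{i+2}a^{2-i}$, and symmetrically for $\rho^ia^{-i}\rho^{-1}a^{-1}$. A vertex $bv\in bH_k$ covers two required vertices only when $v\in K_k'$; otherwise it covers at most one. There are at most $k+1$ vertices of $D$ in $bH_k$. So if one were missing from $bK_k'$, at most $k-1$ of them could cover two each, leaving at least two required vertices to be covered singly. Those would need at least two more vertices, giving $k+1$ in total. Combined with~\Cref{delta-cover-diags}, which needs a vertex in each coset $\rho^ibK_k$, I expect the count to fail. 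This forces $bK_k'\subseteq D$, and hence $\langle K_k',b\rangle\subseteq D$.
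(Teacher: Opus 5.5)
Your Steps 1 and 2 match the paper. Your Step 3 is a cleaner replacement for the paper's informal ``shifted block'' argument, and the computation you left as a plan does go through. Write $w=b\rho^r\tau a^s$. Its only neighbours in $\tau H_k$ are $w\rho ab=\rho^{r-1}\tau a^{s+1}$ and $w\rho^{-1}a^{-1}b=\rho^{r+1}\tau a^{s-1}$. So two failures would have to occur at $i=r-2$ (with $j_{r-1}=s+1$) and at $i=r$ (with $j_{r+1}=s-1$). But then $i=r-1$ is not a failure, so $j_r=s$ and $j_{r+1}=j_r-1$, which means $i=r$ is not a failure after all. Your cycle-sum then rules out a single failure.

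The genuine gap is at the end of Step 4. Your count correctly shows that at most one vertex $bv_0$ of $bK_k'$ can be missing. It also shows that if one is missing, then $D\cap bH_k=(bK_k'\setminus\{bv_0\})\cup\{bv_0\rho^{2}a^{2},\,bv_0\rho^{-2}a^{-2}\}$, which has exactly $k+1$ vertices. That is permitted when $|D|=2k+1$, so the count alone gives no contradiction.

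\Cref{delta-cover-diags} does not supply one either. Since $\rho^{\pm2}a^{\pm2}\in K_k$, both replacement vertices lie in the same left coset $bv_0K_k$ as the missing vertex, so every left coset of $K_k$ in $bH_k$ still meets $D$.

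The ingredient you need is \Cref{rows-covered}, which is what the paper uses. In this configuration all $k+1$ vertices of $D\cap bG_k$ lie in $bH_k$, so $D\cap b\tau H_k=\emptyset$. Writing $v_0=\rho^{i_0}a^{-i_0}$, the two replacements lie in $b\rho^{i_0\pm2}A_k$. Hence the left coset $b\rho^{i_0}B_k$ contains no vertex of $D$, which contradicts \Cref{rows-covered}. You can also see this directly: a vertex of $b\rho^{i_0}\tau A_k$ could then only be dominated via $\rho\tau a^{\pm1}$ from $b\rho^{i_0-1}a^{1-i_0}$, and that vertex covers just two of the $k$ vertices in the row.
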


\begin{proof}
By~\Cref{same-coset}, since $G_k$ has only $k$ vertices of $D$, all of these must lie in the same coset of $\langle H_k, b\rangle$. Since $e \in D\cap G_k$ and $e \in H_k$, this means that all $k$ of these vertices of $D$ lie in $\langle H_k,b\rangle \cap G_k=H_k$. By~\Cref{same-coset}, there are another $k$ or $k+1$ vertices in $bH_k$, and if there are $k$ then there may be one more in $b\tau H_k$.

We begin by showing that whenever two vertices in consecutive cosets of $A_k$ (in $H_k$) do not lie in the same coset of $K_k'$, this forces $D$ to have a vertex in $b\tau H_k$. Accordingly, suppose that the vertex of $D$ in $\rho^iA_k$ (this exists by~\Cref{rows-covered} and the fact that there all $k$ vertices of $G_k \cap D$ lie in $H_k$) is in a different coset of $H_k$ than the vertex of $D$ in $\rho^{i+1}A_k$: say these vertices are $\rho^i a^j$ and $\rho^{i+1}a^{j-\ell}$, so by assumption, $\ell \neq 1$. Consider what vertex of $D$ can dominate $\rho^{i+1}\tau a^{j-\ell}$. This is not dominated by $\rho^{i+1}a^{j-\ell}$ and this is the only vertex of $D$ in $\rho^{i+1}A_k$; furthermore, there are no vertices of $D$ in $\tau H_k$, and no vertex of $bH_k$ is adjacent to any vertex of $\tau H_k$. Therefore, the only options for dominating this vertex are $\rho^i a^{j-\ell+1}$, $\rho^i a^{j-\ell-1}$, $\rho^{i+2}\tau a^{j-\ell-1} b$, or $\rho^i \tau a^{j-\ell+1}b$. Since $\ell\neq 1$ and $\rho^i a^j$ is the only vertex in $D \cap \rho^iA_k$, the first of these is not possible. By~\Cref{delta-cover-diags}, since there are only $k$ vertices of $D$ in $H_k$, there must be exactly one in each coset of $K_k$; since, $\rho^i a^{j-\ell-1}$ is in the same coset of $K_k$ as $\rho^{i+1}a^{j-\ell}$, it cannot also lie in $D$. Thus, either $\rho^{i+2}\tau a^{j-\ell-1} b$, or $\rho^i \tau a^{j-\ell+1}b$ lies in $D$, as claimed.

It is clear from the above argument that if $K_k'$ does not lie within $D$, then the vertices of $D\cap H_k$ must lie within at least two distinct cosets of $K_k'$. Following the implications of that argument, a single vertex in $b\tau H_k$ cannot possibly dominate all of the necessary vertices if there are more than two distinct cosets, or if the vertices in each coset aren't all consecutive. This leaves only the possibility that the vertices in some set of consecutive rows are all shifted into a different coset of $K_k'$. But it is not possible to perform such a shift and still ensure that every coset of $K_k$ in $H_k$ meets $D$. The only remaining possibility is that $D \cap H_k=K_k'$.

Now $D$ has $k$ or $k+1$ additional vertices, at least $k$ of which lie in $bH_k$, with one possible additional vertex in either $bH_k$ or $b\tau H_k$. These vertices must dominate all vertices of $H_k$ that are not dominated by any vertex of $K_k'$ (specifically, the vertices $v\rho a$ and $v\rho^{-1}a^{-1}$ for each $v \in K_k'$, i.e. $\rho^{-i+1}a^{i+1}$ and $\rho^{-i-1}a^{i-1}$ for each $i$). Since any possible vertex in $b \tau H_k$ does not help with this, we see that either every vertex of $bK_k'$ is in $D$ and we are done, or all but one of the vertices of $bK_k'$ are in $D$, and the two vertices of $H_k$ that would be dominated by the missing vertex of $bK_k'$ are instead dominated by two distinct vertices of $bH_k\cap D$. Without loss of generality, suppose $b \notin D$, so we must use $\rho^2a^2b$ and $\rho^{-2}a^{-2}b$ to dominate $\rho a$ and $\rho^{-1}a^{-1}$. But then there is no vertex of $D$ in $bB_k$, contradicting~\Cref{rows-covered}. This completes the proof.
\end{proof}

We restate essentially this result in a more useful form.

\begin{cor}\label{K'b-cosets}
Suppose $k \ge 17$ is odd and $D$ is a dominating set for $\Delta_k$ with at most $2k+1$ vertices. Then $D$ contains a left coset of $\langle K_k',b\rangle$.
\end{cor}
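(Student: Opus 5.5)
The plan is to reduce to \Cref{dom-set-structure} by translating $D$ with a suitable automorphism of $\Delta_k$. Since $\Delta_k$ is a Cayley graph, left multiplication by any $g \in \langle G_k,b\rangle$ is an automorphism. It therefore sends dominating sets to dominating sets of the same cardinality, and it sends left cosets of $\langle K_k',b\rangle$ to left cosets of $\langle K_k',b\rangle$.

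First, by \Cref{balanced-copies} (applicable since $k\ge 17>5$), $D$ has at least $k$ vertices in each coset of $G_k$. Because $|D|\le 2k+1$, at least one of the two cosets $G_k$, $bG_k$ contains exactly $k$ vertices of $D$. Call this coset $b^iG_k$ and pick some $d \in D\cap b^iG_k$.

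Next consider $D'=d^{-1}D$. This is a dominating set for $\Delta_k$ with $|D'|=|D|\le 2k+1$, and it contains $e$. Since $G_k$ is normal in $\langle G_k,b\rangle$ (it has index $2$), left multiplication by $d^{-1}$ maps the coset $b^iG_k=dG_k$ onto $G_k$. Hence $D'$ contains exactly $k$ vertices of $G_k$, one of which is $e$. The hypotheses of \Cref{dom-set-structure} then hold, so $\langle K_k',b\rangle\subseteq D'$.

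Finally, $d\langle K_k',b\rangle\subseteq dD'=D$, and $d\langle K_k',b\rangle$ is a left coset of $\langle K_k',b\rangle$, which proves the claim.

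There is no real obstacle here, since all the heavy lifting is in \Cref{dom-set-structure}. The only points to check are the two facts used along the way. One is that the translate $d^{-1}D$ still has exactly $k$ vertices in $G_k$; this follows from normality of $G_k$. The other is that \Cref{dom-set-structure} imposes no other hypotheses. In particular it allows the remaining $k$ or $k+1$ vertices to lie anywhere in $bG_k$, and it does not require choosing which $G_k$-coset is the small one beyond what we arranged.
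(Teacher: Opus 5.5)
Your proposal is correct and is essentially the paper's own argument: use \Cref{balanced-copies} to find a coset of $G_k$ containing exactly $k$ vertices of $D$, translate by the inverse of one of those vertices so that \Cref{dom-set-structure} applies, and translate back to obtain the left coset $d\langle K_k',b\rangle\subseteq D$. The appeal to normality of $G_k$ is unnecessary, since $d^{-1}(dG_k)=G_k$ holds for any left coset, but it does no harm.
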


\begin{proof}
    By~\Cref{balanced-copies}, $D$ must contain at least $k$ vertices in each coset of $G_k$, so since it has a total of either $2k$ or $2k+1$ vertices, it must contain exactly $k$ vertices in at least one of the cosets of $G_k$. If $g\in D$ is a vertex in such a coset, then $g^{-1}D$ is a dominating set for $\Delta_k$ with at most $2k+1$ vertices that includes exactly $k$ vertices of $G_k$, one of which is $e$, so by~\Cref{dom-set-structure}, $g^{-1}D$ contains $\langle K_k',b\rangle$. Thus $D$ contains $g\langle K_k',b\rangle$.
\end{proof}

We now examine possibilities for moving the defenders from one coset of $\langle H_k,b\rangle$ to the other.

\begin{lem}\label{moving}
Suppose $k \ge 3$ is odd. In $\Delta_k$, it is not possible to reconfigure $k$ defenders from the vertices of a left coset of $K_k'$ in one coset of $\langle H_k,b\rangle$, to the vertices of a left coset of $K_k'$ in the other coset of $\langle H_k,b\rangle$.
\end{lem}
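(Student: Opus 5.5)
The plan is to reduce the statement to a bipartite matching problem and show that Hall's condition fails. A reconfiguration of the $k$ defenders from one vertex set $X$ to another vertex set $Y$, with $X$ and $Y$ disjoint (as they are here, lying in different cosets of $\langle H_k,b\rangle$), is exactly a perfect matching between $X$ and $Y$ using edges of $\Delta_k$. So it suffices to exhibit two vertices of the target coset whose only common neighbour in the source coset is a single vertex.

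First I normalise. Left multiplication is an automorphism of $\Delta_k$. Since $\langle H_k,b\rangle$ has index $2$, it is normal, so left multiplication permutes its two cosets. Hence I may assume the source is $K_k'=\{\rho^t a^{-t}: t\in\mathbb Z_k\}\subseteq \langle H_k,b\rangle$ and the target is $hK_k'$ for some $h\in\tau\langle H_k,b\rangle$.

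Next I list the edges of $\Delta_k$ between the two cosets of $\langle H_k,b\rangle$. These come only from the elements of $T_k$ lying in $\tau H_k$, namely $\tau a^i$ for $i\neq 0$, together with $\rho\tau a$ and $\rho\tau a^{-1}$. None of these elements involves $b$. Therefore, if $h\in b\tau H_k$, then no vertex of $K_k'$ has any neighbour in $hK_k'$, and the claim is trivial. So I may take $h=\rho^j\tau a^\ell\in\tau H_k$. Using $\tau\rho^{s}=\rho^{-s}\tau$ and the centrality of $a$, I would rewrite the target as
\[
hK_k'=\{\rho^r\tau a^{r+c}: r\in\mathbb Z_k\},\qquad\text{where } c=\ell-j .
\]
Each row $\rho^r B_k$ contains exactly one target vertex, which I call the target in row $r$.

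The core step is computing the neighbours of each source vertex $\rho^ta^{-t}$ among the targets. Multiplying on the right gives
\[
\rho^ta^{-t}\cdot\tau a^i=\rho^t\tau a^{i-t}\ (i\neq 0),\qquad \rho^ta^{-t}\cdot\rho\tau a^{\pm1}=\rho^{t+1}\tau a^{-t\pm1}.
\]
From this, $\rho^ta^{-t}$ is adjacent to the target in row $t$ if and only if $c\neq -2t$. It is adjacent to the target in row $t+1$ if and only if $c\in\{-2t,\,-2t-2\}$. It is adjacent to no other target.

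Since $k$ is odd, there is a unique $t_0$ with $2t_0=-c$. Then source $t_0$ is adjacent only to the target in row $t_0+1$. Source $t_0-1$ is adjacent to the targets in rows $t_0-1$ and $t_0$. Every other source $t$ is adjacent only to the target in its own row $t$. Consequently, the targets in rows $t_0-1$ and $t_0$ each have source $t_0-1$ as their unique neighbour in $K_k'$. Hall's condition therefore fails on this pair of targets, so no perfect matching exists and the reconfiguration is impossible.

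The only real obstacle is the bookkeeping with left versus right multiplication when rewriting $hK_k'$ and computing the products $\rho^ta^{-t}s$. I would double-check these exponent computations against \Cref{fig1} and \Cref{fig2}; everything else is immediate.
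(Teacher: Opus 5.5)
Your proof is correct. Its setup matches the paper's:
\begin{itemize}
\item the same normalisation, with the source taken to be $K_k'$ and the target forced into $\tau H_k$ because no element of $T_k$ lies in $b\tau H_k$;
\item the same right-multiplication adjacency computation;
\item the same key fact that the target in row $t_0$ (where $2t_0=-c$) has source $t_0-1$ as its only neighbour.
\end{itemize}

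The two arguments differ only in the final step. The paper starts from that forced move and argues, calling it ``straightforward'', that every defender must shift from row $m$ to row $m+1$. This uses that each edge goes from row $m$ to row $m$ or $m+1$ and that both configurations have one vertex per row. It then observes that only the two sources $m=t_0$ and $m=t_0-1$ have such a shift edge. You instead notice that the target in row $t_0-1$ also has source $t_0-1$ as its unique neighbour, so two targets compete for a single defender.

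Your ending is shorter and fully explicit, and it avoids the cyclic-propagation step that the paper leaves to the reader. The paper's ending additionally shows that any candidate reconfiguration would have to be a uniform row shift, but the lemma does not need this.

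One wording fix: in your opening reduction, ``two vertices whose only common neighbour is a single vertex'' should instead say that the \emph{union} of their neighbourhoods in the source is a single vertex. A one-element intersection alone does not violate Hall's condition. Your actual computation proves the correct, stronger statement, so only the phrasing needs changing.
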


\begin{proof}
Without loss of generality, using the vertex-transitivity of $\Delta_k$, assume that the defenders begin on the vertices of $K_k'$. Observe that the neighbours of these vertices in $bG_k$ all lie in $\langle H_k,b\rangle$, so the vertices the defenders are trying to move to must lie in $G_k$. Say the vertices they are trying to move to are the vertices of $\rho^i\tau K_k'=\{\rho^{i+j}\tau a^{j}: 0 \le j \le k-1\}$.

Observe that since $k$ is odd, there is a unique value of $j$ such that $\rho^{i+j}a^j \in K_k'$, namely the value for which $i+j=-j$; that is, $2j=-i$. For this value of $j$, the vertex $\rho^{i+j}\tau a^j$ has only one neighbour in $K_k'$, and that neighbour is $\rho^{i+j-1}a^{-i-j+1}$. So a defender must move from $\rho^{i+j-1}a^{-i-j+1}$ to $\rho^{i+j}\tau a^j$. 

Since both the initial and final configuration of defenders has one defender in each coset of $A_k$ in $G_k$, and by considering the edges available, it is straightforward to conclude that for every $m$, the defender from $\rho^ma^{-m}$ must move to $\rho^{m+1}\tau a^{m+1-i}$. However, an edge between these vertices exists only if $-m+1=m+1-i$ or $-m-1=m+1-i$; that is, $2m=i$ or $2m=i-2$. Each of these possibilities has a unique solution for $m$ since $k$ is odd. Therefore only $2$ of the defenders can move to the appropriate vertices. This contradiction completes the proof.
\end{proof}

The preceding result is very powerful in terms of understanding eternal domination of these graphs.

\begin{cor}\label{cant-move}
Suppose $k \ge 17$ is odd and $D$ is a dominating set for $\Delta_k$ with at most $2k+1$ vertices that contains a left coset of $\langle K_k',b\rangle$ in $\tau^i\langle H_k,b\rangle$ for some $i \in \{0,1\}$. Then it is not possible to reconfigure defenders from $D$ to a dominating set that contains a left coset of $\langle K_k',b\rangle$ in $\tau^{1-i}\langle H_k,b\rangle$.
\end{cor}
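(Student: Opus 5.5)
The plan is to reduce the corollary to Lemma~\ref{moving}, applied to a single coset of $G_k$. The reduction is a counting argument: it locates where the defenders that end up on the target coset can come from.

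Suppose $D$ contains $g\langle K_k',b\rangle = gK_k'\cup gbK_k'$ inside $\tau^i\langle H_k,b\rangle$. Suppose also that $D'$ is a dominating set reachable from $D$ which contains $h\langle K_k',b\rangle = hK_k'\cup hbK_k'$ inside $\tau^{1-i}\langle H_k,b\rangle$. A reconfiguration is a bijection $D\to D'$ in which each defender stays put or moves along an edge.

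The first step is to record which edges of $\Delta_k$ join the two cosets of $\langle H_k,b\rangle$. Inspecting $T_k$, the only elements outside $\langle H_k,b\rangle$ are $\tau a^j$ ($1\le j\le k-1$) and $\rho\tau a^{\pm1}$, and none of them involves $b$. Hence every edge between $\tau^i\langle H_k,b\rangle$ and $\tau^{1-i}\langle H_k,b\rangle$ stays inside a single coset of $G_k$. Since $|D|\le 2k+1$ and $D$ already has $2k$ vertices in $\tau^i\langle H_k,b\rangle$, there is at most one \emph{extra} vertex $x$ of $D$ outside $g\langle K_k',b\rangle$. This $x$ may lie in $\tau^{i}\langle H_k,b\rangle$ or in $\tau^{1-i}\langle H_k,b\rangle$.

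Next, split the target by cosets of $G_k$: $hK_k'$ lies in one coset $cG_k$ and $hbK_k'$ in the other, $cbG_k$. Similarly, exactly one of $gK_k'$ and $gbK_k'$ lies in $cG_k$; call it $P$, and call the other $Q$, so $Q\subset cbG_k$. Consider a defender landing on a vertex of $hK_k'$. It comes either from a vertex of $D$ in $\tau^{1-i}\langle H_k,b\rangle$, which can only be $x$, or across an edge from $\tau^i\langle H_k,b\rangle$. By the first step, such an edge stays in $cG_k$, so the source lies in $P$ or is $x$. Thus the $k$ defenders on $hK_k'$ come from $P\cup\{x\}$. Likewise the $k$ defenders on $hbK_k'$ come from $Q\cup\{x\}$.

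Because the reconfiguration is a bijection, $x$ supplies at most one of these two target sets. For the other one, say $hK_k'$, all $k$ defenders must come from the $k$ vertices of $P$. So the $k$ defenders on the left coset $P$ of $K_k'$ in $\tau^i\langle H_k,b\rangle$ would be reconfigured onto the left coset $hK_k'$ of $K_k'$ in $\tau^{1-i}\langle H_k,b\rangle$. Lemma~\ref{moving} forbids exactly this, so no such reconfiguration exists.

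The main obstacle is the bookkeeping in the first step. I will verify carefully from $T_k$ that no element of $T_k$ both leaves $\langle H_k,b\rangle$ and involves $b$, and that $P$ is a genuine left coset of $K_k'$ to which Lemma~\ref{moving} applies; both are direct checks from the definitions.
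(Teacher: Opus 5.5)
Your proposal is correct and is essentially the paper's argument. The paper also observes that there are no edges between $H_k$ and $b\tau H_k$ or between $bH_k$ and $\tau H_k$, which is your observation that cross edges stay inside a coset of $G_k$. It then uses that the single extra vertex can supply at most one of the two target cosets of $H_k$, so $k$ defenders on one left coset of $K_k'$ must move onto a left coset of $K_k'$ in the other half, contradicting Lemma~\ref{moving}.
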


\begin{proof}
Without loss of generality using vertex-transitivity, we may assume that $D$ contains $\langle K_k', b\rangle$ and possibly one additional vertex $u$.
There must be at least one of the cosets of $H_k$ in $\tau\langle H_k,b\rangle$ to which the defender from $u$ does not reconfigure, so all defenders who end in this coset must come from $\langle K_k',b\rangle$. Since there are no edges between $H_k$ and $b\tau H_k$ or between $bH_k$ and $\tau H_k$, 
these defenders must all in fact come from a single left coset of $K_k'$ (either $K_k'$ or $bK_k'$). Since this means only $k$ defenders end in this coset of $H_k$, the new dominating set only has $k$ vertices in this coset of $H_k$. Hence, these vertices must be a left coset of $K_k'$. This contradicts~\Cref{moving}.
\end{proof}

\begin{thm}\label{cant-defend}
Suppose $k \ge 17$ is odd, and $D$ is a dominating set of cardinality at most $2k+1$ for $\Delta_k$. Then there is a vertex $v \notin D$ such that defenders cannot reconfigure from $D$ to any dominating set that contains $v$. Therefore, $\etdom(\Delta_k)\ge 2k+2=\gamma(\Delta_k)+2$.
\end{thm}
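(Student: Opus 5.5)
By \Cref{K'b-cosets}, $D$ contains a left coset $g\langle K_k',b\rangle$. By vertex-transitivity (left multiplication by $g^{-1}$ preserves adjacency, dominating sets and reconfigurations), we may assume that $D \supseteq \langle K_k',b\rangle$. Then $D$ consists of $\langle K_k',b\rangle \subseteq \langle H_k,b\rangle$ together with at most one further vertex $u$.

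We choose the attacked vertex $v$ in the other coset $\tau\langle H_k,b\rangle$, so that $v\notin\langle K_k',b\rangle$. We also take $v\neq u$. This is possible because $\tau\langle H_k,b\rangle$ has $2k^2>1$ vertices.

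Suppose, towards a contradiction, that the guards reconfigure from $D$ to a dominating set $D'$ with $v\in D'$ and $|D'|=|D|\le 2k+1$. By \Cref{K'b-cosets}, $D'$ contains a left coset $h\langle K_k',b\rangle$ of $2k$ vertices. If this coset lies in $\tau\langle H_k,b\rangle$, then \Cref{cant-move}, applied with $i=0$, says no such reconfiguration exists. This is a contradiction.

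So $h\langle K_k',b\rangle\subseteq\langle H_k,b\rangle$, and $D'$ has at most one vertex outside $\langle H_k,b\rangle$. That vertex must be $v$. Hence $|D'\cap \langle H_k,b\rangle| = 2k$ exactly, and this forces $|D|=2k+1$ and $D' = h\langle K_k',b\rangle\cup\{v\}$. The main obstacle is this case. Here the attacker should choose $v$ more carefully, so that no guard of $D$ can reach $v$ while the rest re-form a coset of $\langle K_k',b\rangle$. The simplest choice is a vertex $v\in\tau\langle H_k,b\rangle$ at distance at least $2$ from every vertex of $D$, so that no guard can move to it in one step. Such a $v$ exists if the at most $2k+1$ vertices of $D$ dominate only a few vertices of $\tau\langle H_k,b\rangle$.

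We count this. A vertex of $\langle H_k,b\rangle$ has exactly $2$ neighbours in $\tau\langle H_k,b\rangle$ in each coset of $G_k$, namely those coming from $\tau a^i$-type and $\rho\tau a^{\pm1}$-type generators; more precisely it has at most $k+1$ such neighbours. So the $2k$ vertices of $\langle K_k',b\rangle$ dominate at most $2k(k+1)$ vertices of $\tau\langle H_k,b\rangle$, and $u$ dominates at most $2k+1$ more. Together this is $2k^2+4k+1$, which exceeds $2k^2$, so the crude count fails. It must be refined using the structure of $K_k'$.

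From the proof of \Cref{dom-gamma}, the vertices of $K_k'$ dominate $\tau H_k$ with overlaps only on the diagonal $\ell=-j$. Similarly, $bK_k'$ covers $b\tau H_k$. It is cleaner to pick $v$ in $\tau H_k$ or $b\tau H_k$, whichever avoids $u$'s influence: $u$ lies in one coset of $G_k$ and dominates at most $2$ vertices of the other coset's part of $\tau\langle H_k,b\rangle$. Within that part, we need a vertex $v$ that is dominated by exactly one guard $w$ of $\langle K_k',b\rangle$ and not by $u$. Then the guard on $w$ must move to $v$.

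Now, as in the proof of \Cref{etdom-bigger}, $v$ determines a unique candidate coset: $h\langle K_k',b\rangle$ must dominate the rest, and it has to be a specific coset in $\langle H_k,b\rangle$. The remaining guards, namely all of $\langle K_k',b\rangle$ except $w$, together with $u$, must fill $h\langle K_k',b\rangle$. Each $k$-block of $h\langle K_k',b\rangle$ in $H_k$ or $bH_k$ must be filled from the same $k$-block or with help from $u$. Using that moves inside $H_k$ follow $K_k$-cosets, an argument like the matching count in \Cref{moving} shows this is impossible, since only $O(1)$ guards can land correctly unless $h\langle K_k',b\rangle=\langle K_k',b\rangle$. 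In that case $w$ leaving creates a hole that $u$ cannot fill unless $u$ is adjacent to $w$, and we choose $v$ to rule this out. Thus no reconfiguration exists, and together with \Cref{dom-delta} this gives $\etdom(\Delta_k)\ge 2k+2=\gamma(\Delta_k)+2$.
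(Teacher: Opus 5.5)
Your reduction is correct as far as it goes. \Cref{K'b-cosets} and \Cref{cant-move} do force any successful defence of $v\in\tau\langle H_k,b\rangle$ to end at $D'=h\langle K_k',b\rangle\cup\{v\}$ with $h\langle K_k',b\rangle\subseteq\langle H_k,b\rangle$. This disposes of $|D|=2k$ exactly as the paper does.

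The gap is the case $|D|=2k+1$, which is the real content of the theorem, and the claims you rely on there are false.
\begin{itemize}
\item $v$ does not determine $h$. Every left coset of $\langle K_k',b\rangle$ inside $\langle H_k,b\rangle$ is itself dominating, so $h\langle K_k',b\rangle\cup\{v\}$ is an admissible target for every such $h$.
\item There is no analogue of the count in \Cref{moving} inside $\langle H_k,b\rangle$. Every edge there stays inside a left coset $x\langle K_k,b\rangle$. For $x\in K_k'$, the guards on $x$ and $xb$ can move to $y$ and $yb$ for \emph{every} $y=x(\rho a)^t$: directly if $t\neq\pm1$, and crosswise along $\rho ab$ or $\rho^{-1}a^{-1}b$ if $t=\pm1$. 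So $\langle K_k',b\rangle$ can be reconfigured onto any $h\langle K_k',b\rangle\subseteq\langle H_k,b\rangle$, and ``only $O(1)$ guards can land correctly'' is wrong.
\item Your final criterion fails as a consequence. With $h$ free, $u$ need not be adjacent to $w$ to plug the hole. For example, if $u=w\rho a$, the guards on $u$ and $wb$ go to $w(\rho a)^3$ and $w(\rho a)^3b$, while every other pair shifts by $(\rho a)^3$.
\item Your intermediate idea of a $v$ at distance at least $2$ from $D$ is impossible outright, since $D$ is dominating.
\end{itemize}

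What is missing is the invariant the paper exploits. Both $\langle K_k',b\rangle$ and any $h\langle K_k',b\rangle\subseteq\langle H_k,b\rangle$ meet each left coset of $\langle K_k,b\rangle$ in $\langle H_k,b\rangle$ in exactly two vertices. A guard that stays in $\langle H_k,b\rangle$ also stays in its $\langle K_k,b\rangle$-coset. So the coset that supplies the guard for $v$ can only be refilled by $u$, and this forces a split on where $u$ lies.
\begin{itemize}
\item If $u\in\langle H_k,b\rangle$, attack a $v$ not dominated by any of the three guards in $u$'s $\langle K_k,b\rangle$-coset. This is possible since they dominate at most $3(k+1)<2k^2$ vertices of $\tau\langle H_k,b\rangle$. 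The coset that supplies $v$'s guard is then left with one guard and can never be refilled.
\item If $u\in\tau\langle H_k,b\rangle$, normalise to $u=\rho^i\tau a^ib$ and attack $v=\rho^{i+4}\tau a^{-i-4}$. Its only dominator in $D$ is $w=\rho^{i+3}a^{-i-3}$. Since $u$ has no neighbours in $H_k$ and $D'$ must meet $wK_k$, the guard on $wb$ must move to $w\rho a$ or $w\rho^{-1}a^{-1}$. This pins $h\langle K_k',b\rangle=\rho^{\pm2}\langle K_k',b\rangle$. Then $u$ must refill $wbK_k$ at $\rho^{i+4}a^{-i-2}b$ or $\rho^{i+2}a^{-i-4}b$. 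But all its neighbours in $bH_k$ lie in $\rho^ibA_k\cup\rho^{i-1}bA_k$, so this is impossible.
\end{itemize}
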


\begin{proof}
By~\Cref{K'b-cosets}, $D$ must contain a left coset of $\langle K_k',b\rangle$.
    Without loss of generality, using vertex-transitivity, we may assume that $D$ contains $\langle K_k',b\rangle$ and possibly one additional vertex $u$. Observe that this means that $D$ contains exactly $2$ vertices in each left coset of $\langle K_k,b\rangle$ in $\langle H_k,b\rangle$, except that it may possibly contain $3$ vertices in one of these left cosets (if $u$ is one of these vertices).

    We consider $3$ possibilities: first, suppose there is no $u$. Then for any $v \in \tau\langle H_k,b \rangle$, we must reconfigure a defender from $\langle K_k',b \rangle$ to this vertex, but by~\Cref{cant-move} and the characterisation of~\Cref{K'b-cosets}, it is not possible for the reconfigured defenders to end on a dominating set.

    Next, suppose $u \in \langle H_k,b\rangle$, so there is a left coset of $\langle K_k,b\rangle$ in $\langle H_k,b\rangle$ that has $3$ defenders. Let $v$ be any vertex of $\tau\langle H_k,b\rangle$ that is not dominated by any of the three vertices occupied by these defenders (there are at least $2k^2-3(k+1)>0$ such vertices, so this is possible). Then some defender from another left coset of $\langle K_k,b\rangle$ must move to $v$, reducing the number of defenders in this left coset of $\langle K_k,b\rangle$ to $1$. However, by~\Cref{K'b-cosets} together with~\Cref{cant-move}, we must end with at least two defenders in each left coset of $\langle K_k,b\rangle$ in $\langle H_k,b\rangle$, and within $\langle H_k,b\rangle$, any neighbouring vertices lie in the same left coset of $\langle K_k,b\rangle$, so we cannot fill the deficit created by this move. 

    Finally, suppose $u \in \tau\langle H_k,b\rangle$. Using vertex-transitivity, we may assume without loss of generality that $u=\rho^i\tau a^i b$. Consider $v=\rho^{i+4}\tau a^{-i-4}$. Clearly $v$ is not a neighbour of $u$, and it is not hard to check that there is only one vertex of $D$ that dominates $v$, namely $\rho^{i+3}a^{-i-3}$, so the defender from this vertex must move to $v$. By~\Cref{K'b-cosets} and~\Cref{cant-move}, we must end with defenders on every vertex of some coset of $\langle K_k',b\rangle$ in $\langle H_k,b\rangle$. Furthermore, by~\Cref{delta-cover-diags} we must end with a defender in each left coset of $K_k$ in $\langle H_k,b\rangle$. Since the coset that contains $\rho^{i+3}a^{-i-3}$ has been vacated it must be filled, and since all adjacencies within $\langle H_k,b\rangle$ lie within left cosets of $\langle K_k,b\rangle$, and $u$ is not adjacent to any vertex of $H_k$, the defender from $\rho^{i+3}a^{-i-3}b$ must move to either $\rho^{i+4}a^{-i-2}$ or $\rho^{i+2}a^{-i-4}$. Therefore the coset of $\langle K_k',b\rangle$ that we move to must be either $\rho^{2}\langle K_k',b\rangle$, or $\rho^{-2}\langle K_k',b\rangle$. 

    Notice that no defender who remains on a vertex of $\langle K_k',b\rangle$ can move to fill the newly-vacant left coset of $K_k$ in $bH_k$. The only possibility is that this vacancy is filled by the defender from $u$. The vertex of $\rho^{2}\langle K_k',b\rangle$ that lies in the vacated left coset is $\rho^{i+4}a^{-i-2}b$
    and the vertex of $\rho^{-2}\langle K_k',b\rangle$ that lies in the vacated left coset is $\rho^{i+2}a^{-i-4}b$. But the neighbours of $u$ in $bH_k$ are all in $\rho^i b A_k \cup \rho^{i-1}bA_k$, so using the defender from $u$ is not possible.
\end{proof}

\end{document}